\newcolumntype{P}[1]{>{\centering\arraybackslash}p{#1}}
\numberwithin{equation}{section}
\newtheorem{theorem}[equation]{Theorem}
\newtheorem{lemma}[equation]{Lemma}
\newtheorem{conjecture}[equation]{Conjecture}
\newtheorem{proposition}[equation]{Proposition}
\newtheorem{corollary}[equation]{Corollary}
\newtheorem*{theorem*}{Theorem}
\theoremstyle{definition}
\newtheorem{definition}[equation]{Definition}
\newtheorem{example}[equation]{Example}
\theoremstyle{remark}
\newtheorem{remark}[equation]{Remark}
\DeclareMathOperator*{\foo}{\scalerel*{\boxplus}{\sum}} 
\tikzset{degil/.style={
            decoration={markings,
            mark= at position 0.5 with {
                  \node[transform shape] (tempnode) {$\backslash$};}},postaction={decorate}}} 
\renewcommand{\H}{\mathbb{H}}
\renewcommand{\P}{\mathbb{P}}
\newcommand{\K}{\mathbb{K}}
\renewcommand{\S}{\mathbb{S}}
\newcommand{\T}{\mathbb{T}}
\newcommand{\R}{\mathbb{R}}
\newcommand{\C}{\mathbb{C}}
\newcommand{\Z}{\mathbb{Z}}
\newcommand{\0}{{\vmathbb 0}}
\newcommand{\1}{{\vmathbb 1}}
\begin{document}
\title{Generalising Kapranov's Theorem For Tropical Geometry over Hyperfields}
\author{James Maxwell}
\address{
Department of Mathematics, Computational Foundry, Bay Campus, Swansea University, Fabian Way, Swansea, SA1 8EN}
\date{April 2022}
\email{825537@swansea.ac.uk, james.william.maxwell@gmail}

\begin{abstract}
Kapranov's theorem is a foundational result in tropical geometry. It states that the set of tropicalisations of points on a hypersurface coincides precisely with the tropical variety of the tropicalisation of the defining polynomial. The aim of this paper is to generalise Kapranov's theorem, replacing the role of a valuation, $\mathit{trop}:K \rightarrow \R\cup\{-\infty\}$, with a more general class of hyperfield homomorphisms, $\H \rightarrow \T$, which satisfy a relative algebraic closure condition. The map, $\eta : \T\C \rightarrow \T$, $\eta(x) : = \mathrm{log}(|x|)$, where $\T\C$ is the tropical complex hyperfield, provides an example of such a hyperfield homomorphism.
\end{abstract}

\maketitle


\thispagestyle{empty}
\section{Introduction}

\noindent Hyperfields are generalisations of fields, where the addition is allowed to be a multivalued operation. The notion of a multivalued operation was introduced by Marty in the mid 1930's, who discussed hypergroups. Then, Krasner in \cite{KRA} progressed the notion to hyperrings, and more recently the connection to tropical geometry has been discussed by Viro in \cite{VIRa}, then by Connes and Consani in \cite{CC}. One of the main objects studied in this paper is $\T\C$, the tropical complex hyperfield. It was first introduced by Viro in \cite{VIR}, and is described as a dequantisation of the complex numbers. The motivation of  Viro's work was to utilise structures with multivalued addition as an algebraic foundation for tropical geometry. There has been substantial progress made in the last several years in the development of the algebraic theory of hyperfields. Baker and Bowler developed a theory of matroids over hyperfields in \cite{BB}, and there has been work done by Bowler and Lorscheid on roots and multiplicities, especially characterising multiplicities for the Krasner, sign and tropical hyperfields, in \cite{BL}. The work completed by Jun, in \cite{JJ} and \cite{JJb}, is a more recent study of algebraic geometry over hyperfields. In \cite{JJb} algebraic sets over hyperfields are introduced and connected to tropical varieties, along with a scheme-theoretic point of view leading to a demonstration that hyperrings without zero divisors can be realised as the hyperring of global regular functions in \cite[Theorem D]{JJb}. One consequence of the multivalued addition is a necessary extension of the notion of a root. Over hyperfields roots are defined as elements at which the polynomial outputs a set that includes zero, rather than exactly equals zero.  \\

\noindent The motivation for this paper comes from Kapranov's theorem in tropical geometry, \cite[Theorem 3.1.3]{MS}, which states, under mild hypotheses, the set of roots of the tropicalisation of a polynomial $p$ coincides with the set of tropicalisations of roots of $p$. Concretely, every root of $\mathit{trop}(p)$ lifts to a root of $p$. The connection is that the tropicalisation map, or valuation $\mathit{trop}:K \rightarrow \R \cup \{-\infty\}$, is naturally a hyperfield homomorphism. \\

\noindent We define a property of maps between hyperfields called \emph{relatively algebraically closed} (RAC). We describe this precisely in Definition \ref{RACdef}, but intuitively the RAC property states that when a univariate polynomial is pushed forward through a hyperfield homomorphism, the roots of the polynomial can be lifted back to roots of the original polynomial. The existence of a hyperfield homomorphism that has this property is demonstrated by the map $\eta =\log(| \cdot |) : \T\C \rightarrow \T$, in Theorem \ref{TCRAC}, where $\T\C$ is the tropical complex hyperfield and $\T:=\R \cup \{-\infty\}$ is the tropical hyperfield. In particular,  the operations on $\T$ are the hyperaddition $\boxplus$, defined as $a \boxplus b = \mathrm{max}(a,b)$, unless $a=b$ then $a \boxplus b = [-\infty,a]$. The multiplication is $\odot:= +$. This is a hyperfield analogue of the idempotent semiring structure that has been studied in \cite{ACGNQ} and more recently presented in both \cite{MS} and \cite{VIRa}.\\

\noindent The main result of this paper is a hyperfield version of Kapranov's theorem for RAC hyperfield homomorphisms which map to $\T$.

\begin{theorem}
Take a RAC hyperfield homomorphism, $f: \H \rightarrow \T$, with induced map of polynomials $f_*: \H[X_1 \, , \dots , \, X_n] \rightarrow \T[X_1 \, , \dots , \, X_n]$, given by applying $f$ coefficient-wise. Then for all polynomials $p \in \H[X_1 \, , \dots , \, X_n]$,
$$ V(f_*(p)) = f(V(p)).$$
\end{theorem}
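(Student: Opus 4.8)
The plan is to prove the two inclusions $f(V(p)) \subseteq V(f_*(p))$ and $V(f_*(p)) \subseteq f(V(p))$ separately, reducing the multivariate statement to the univariate RAC condition by a standard substitution argument. The first inclusion should be the easy direction and should hold for \emph{any} hyperfield homomorphism, not just RAC ones: if $a = (a_1,\dots,a_n) \in V(p)$, meaning $0 \in p(a)$ in $\H$, then since $f$ is a hyperfield homomorphism it commutes with multiplication and maps the hypersum $p(a)$ into the hypersum $f_*(p)(f(a))$ in $\T$; hence $0 = f(0) \in f_*(p)(f(a))$, so $f(a) \in V(f_*(p))$. I would spell this out carefully, being attentive to the fact that for hyperfield homomorphisms one only has $f(\boxplus S) \subseteq \boxplus f(S)$ in general (this containment is exactly what is needed here), and to the bookkeeping that $f$ sends $0_\H$ to $0_\T = -\infty$.

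The reverse inclusion is where the RAC hypothesis does the work. Given $b = (b_1,\dots,b_n) \in V(f_*(p))$, I must produce a lift $a \in V(p)$ with $f(a) = b$. The strategy is induction on the number of variables $n$. For $n = 1$, this is essentially the definition of RAC applied to the univariate polynomial $p$: since $b$ is a root of $f_*(p)$, the RAC property of $f$ furnishes a root $a$ of $p$ with $f(a) = b$ — I would check that the definition of RAC in Definition~\ref{RACdef} is stated precisely so as to give this, possibly after handling the coefficient $0 \in \H$ / leading-term edge cases. For the inductive step, I would fix lifts $a_2,\dots,a_n \in \H$ of $b_2,\dots,b_n$ (using surjectivity-type consequences of RAC, or that $f$ is surjective, which should follow since $\T$ itself is in the image once RAC is assumed — this needs checking), substitute them into $p$ to obtain a univariate polynomial $q(X_1) = p(X_1, a_2, \dots, a_n) \in \H[X_1]$, observe that $f_*(q)(X_1)$ and $f_*(p)(X_1, b_2, \dots, b_n)$ agree up to the containment coming from the homomorphism property, and then apply the $n=1$ case to lift $b_1$ to a root $a_1$ of $q$; then $a = (a_1, \dots, a_n) \in V(p)$ by construction.

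The main obstacle I anticipate is the subtlety introduced by the multivaluedness of hyperaddition in the substitution step: when one substitutes specific elements $a_i$ into a polynomial and then pushes forward, the set $f_*(p)(b)$ is not literally equal to $f_*(q)(b_1)$ but related by a chain of containments $f(p(a)) \subseteq f_*(p)(f(a))$ and the analogous statements for $q$, so I must be careful that the lift of $b_1$ to a root of $q$ genuinely yields $0 \in p(a)$ and not merely $0 \in f_*(p)(b)$. Concretely, the RAC definition must be robust enough that it lifts a root of the pushed-forward polynomial to a root of the \emph{original} polynomial over $\H$, and I would want to confirm that Definition~\ref{RACdef} is phrased to supply exactly this (rather than only lifting roots up to the hyperoperation's slack). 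A secondary technical point is that $\T$ has the special ``doubling'' behaviour $a \boxplus a = [-\infty, a]$, so zero can appear in a hypersum in $\T$ in ways that have no analogue over a field; the RAC hypothesis on $f$ is precisely designed to control when such a tropical root lifts, and Theorem~\ref{TCRAC} guarantees this class of maps is nonempty, so I would invoke RAC as a black box here and not attempt to reprove anything about $\eta$. Modulo these care points, the proof is a clean reduction-to-one-variable argument.
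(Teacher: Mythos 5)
The easy inclusion $f(V(p)) \subseteq V(f_*(p))$ you handle exactly as the paper does (it is Lemma~\ref{pushforwardgeneral}), so no issues there.

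For the reverse inclusion, your plan --- induct on $n$ by fixing lifts $a_2,\dots,a_n$ of $b_2,\dots,b_n$ and substituting them into $p$ to get a univariate $q(X_1)=p(X_1,a_2,\dots,a_n)\in\H[X_1]$ --- has a fatal flaw that you partially sense but do not identify: over a hyperfield, substituting an \emph{element} for a variable does not produce a polynomial. The would-be coefficient of $X_1^{i}$ in $q$ is
$\foo_{(i_2,\dots,i_n)} c_{(i,i_2,\dots,i_n)}\odot a_2^{i_2}\odot\cdots\odot a_n^{i_n}$,
which is a \emph{subset} of $\H$, not a single element. So $q$ is not an element of $\H[X_1]$ to which Definition~\ref{RACdef} could be applied; at best you have a set of candidate polynomials, and there is no canonical choice whose roots control $p$. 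This is not a bookkeeping wrinkle you can push through by tracking containments: it blocks the reduction to one variable as you've set it up. (The remark following the definition of $\H[X_1,\dots,X_n]$, and an explicit caveat in the paper's proof, both flag that a restriction of a multivariate polynomial can become a \emph{set} of polynomials and that this must be prevented.)

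The paper's route avoids this by restricting along a one-parameter \emph{monomial} curve rather than pinning down coordinates: choose lifts $\lambda_i\in f^{-1}(a_i)$ (here $\underline{a}=(a_1,\dots,a_n)$ is the given tropical root) and a vector $D=(d_1,\dots,d_n)\in\Z^n$ such that the dot products $D\cdot J$ with the exponent vectors $J$ of $p$ are pairwise distinct; then set $\varphi(x)=(\lambda_1\odot x^{d_1},\dots,\lambda_n\odot x^{d_n})$. The pullback $\varphi^*p=\foo_I c_I\odot\underline{\lambda}^I\odot X^{D\cdot I}$ genuinely is a single univariate polynomial because the genericity of $D$ prevents any two monomials from colliding onto the same power of $X$, hence no hypersums in the coefficients. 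One then checks $f_*(\varphi^*p)(0)=f_*(p)(\underline{a})\ni\0$, applies RAC to lift $0\in\T$ to a root $\widetilde{a}$ of $\varphi^*p$, and pushes forward $\varphi(\widetilde{a})\in V(p)$, which maps to $\underline{a}$ under $f$ coordinate-wise (note $f(\lambda_i\odot\widetilde{a}^{\,d_i})=a_i\odot 0^{d_i}=a_i$). In short, the key idea you are missing is the \emph{generic monomial substitution}: replacing all variables simultaneously by scaled powers of a single new variable, with the exponent vector chosen off a finite hyperplane arrangement, rather than an element-by-element induction, which collapses the polynomial structure over a hyperfield.

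One further small caution: your suggestion that $f$ is automatically surjective because its target is $\T$ does not follow from RAC (the paper itself gives $\mathbb{Q}\hookrightarrow\mathbb{Q}(\alpha)$ as a RAC but non-surjective map). The paper's proof quietly uses that each coordinate $a_i$ of a point $\underline{a}\in V(f_*(p))$ admits a lift, so the statement should really be read either under a surjectivity hypothesis (as in the motivating valuation example) or with the understanding that the coordinates of $\underline{a}$ lie in the image of $f$; in either case this is not something you can derive for free.
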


\noindent A natural question regarding the RAC property of hyperfield homomorphisms is: can we give tractable sufficient conditions to guarantee that a map between hyperfields is RAC? This question is explored in Section \ref{characterisingRAC} and partially answered, stating sufficient conditions for a specific class of hyperfield homomorphisms to be RAC. 

\subsection*{Structure of the paper}
In Section \ref{background} hyperfields and hyperfield homomorphisms will be introduced and the definitions of polynomials and their roots over hyperfields are recalled. This will include examples of common hyperfields. The relative algebraic closure property is defined in Section \ref{background}, along with examples, in particular it is shown that $\eta =\log(|\cdot|):\T\C \rightarrow \T$ has this property. Then in Section \ref{KapsThm} the RAC property is used to construct a proof of a hyperfield version of Kapranov's theorem for maps $\H \rightarrow \T$. Finally, in Section \ref{characterisingRAC} the characterisation of RAC maps is discussed. In particular, the multiplicity bound and inheritance properties are introduced and utilised to provide sufficient conditions for a hyperfield homomorphism to be RAC.

\subsection{Acknowledgements}
The author thanks Jeffrey Giansiracusa for the support and input during the supervision sessions, where the ideas and content for the work was developed. The author would also like to thank Ben Smith, Jaiung Jun and the two anonymous referees for helpful feedback on earlier drafts. This work was carried out during the author's PhD and was supported by Swansea University, College of Science Bursary EPSRC DTP (EP/R51312X/1), and the author would like to show thanks for making the research possible.

\section{Background}\label{background}

\noindent A hyperfield is a generalisation of a field, where the addition operation is allowed to be a multivalued operation, introduced by Marty and developed into hyperrings by  Krasner \cite{KRA}. Hyperfields are one class of generalisations of rings and fields; others include tracts (see \cite{BBa})  and fuzzy rings (see \cite{DWa}). There is a discussion of this range of algebraic structures in \cite{BLa}. The connection between fuzzy rings and hyperrings is described in \cite{GJL}, where a fully faithful functor from hyperfields to fuzzy rings with weak morphisms is constructed. One particular motivation for working with hyperfields is that the tropical semiring, where a foundational theory has been thoroughly developed, with \cite{MS} as a general reference, has a natural analogue as the tropical hyperfield. This section will recall the definitions and notation of hyperfields.

\subsection{Hyperfields}

\noindent Given a set $\H$, then a map $\H \times \H \to P(\H)^*$ is a hyperoperation of $\H$ which is denoted by $\boxplus$. Where $P(\H)^*$ is the power set of $\H$, explicitly, $P(\H)^*$ is the set of all nonempty subsets of $\H$. Then for subsets $A,B \subseteq \H$,
$$ A \boxplus B := \bigcup_{a \in A\, , \,b \in B} (a \boxplus b).$$ 
This definition can be extended for a string of elements. Let $x_1 , ... \, ,x_k \in \H$ then we define their sum as follows;
$$ x_1 \boxplus x_2 \boxplus ... \, \boxplus x_k = \bigcup_{x' \in x_2 \boxplus ... \boxplus x_k} x_1 \boxplus x' \qquad.$$
The hyperoperation $\boxplus$ is called commutative and associative if it satisfies 
\begin{equation}
x \boxplus y = y \boxplus x  \label{hypass}
\end{equation}
and
\begin{equation}
( x \boxplus y) \boxplus z = x\boxplus ( y \boxplus z) \label{hypcom}
\end{equation}
respectively. 
\begin{definition} 
A canonical hypergroup is a tuple $(\H, \boxplus ,\0)$, where $\boxplus$ is a commutative and associative hyperoperation on $\H$ such that:
\begin{itemize}
\item (H0) \, $\0 \boxplus x = \{ x \}, \quad \forall \, x \in \H$.
\item (H1) \,  For every $x \in \H$ there is a unique element of $\H$, denoted $-x$, such that $\0 \in x \boxplus -x$.
\item (H2) \, $x \in y \boxplus z$ iff $z \in x \boxplus (-y)$. This is normally referred to as $\mathit{reversibility}$.
\end{itemize}
The reversibility condition is not required for non-canonical hypergroups, but throughout this work only canonical hypergroups will be used so the label canonical is dropped.
\end{definition}

\begin{definition}
A hyperring is a tuple $(\H, \odot ,\boxplus , \1 , \0)$ such that:

\begin{itemize}
\item $(\H , \odot , \1)$ is a commutative monoid.
\item $(\H , \boxplus , \0)$ is a commutative hypergroup.
\item (Absorption rule) $\0 \odot x = x \odot \0 = \0$ \, for all \, $x \in \H$.
\item (Distributive law) \, $a \odot ( x \boxplus y) = ( a \odot x) \boxplus (a \odot y)$ \, for all $a,x,y \in \H$.
\end{itemize}
\end{definition}

\begin{definition}
A hyperring $\H$ is called a hyperfield if $ \0 \neq \1$ and every non-zero element of $\H$ has a multiplicative inverse.
\end{definition}

\begin{remark}\label{context}
From this point onward, to clarify context, when discussing results over a field $K$ the following notation will be used: $+,\times (\text{or}\, \cdot)$ and $\sum $. Whereas, when discussing results over a hyperfield $\H$ the following notation will be used: $ \boxplus \, , \, \odot$ and $\foo $. 
\end{remark}

\subsection{Examples of Hyperfields}
The following are some key examples of common hyperfields used in the literature. They will be used to demonstrate the properties that are defined in this paper.

\begin{example}\label{Ex:trop-hyp}
The tropical hyperfield has the underlying set $\R \cup \{-\infty\}$ and it is usually denoted as $\T$. The multiplication on $\T$ is an extension of the addition on $\R$:
$$ x \odot y := x+y \quad \text{and} \quad x \odot -\infty = x + -\infty = -\infty.$$
The hyperaddition is defined as the following multivalued operation:
\begin{equation}
x \boxplus y = 
\begin{cases}
\{ \mathrm{max}(x,y)\}, \quad & \text{if} \quad x \neq y\\
\{ z \, | \, z \le x\} \cup \{-\infty\}, \quad & \text{if} \quad x = y
\end{cases}
\end{equation}
The additive neutral element $\0$ of $\T$ is $-\infty$ and the multiplicative neutral element $\1$ of $\T$ is 0. The tropical hyperfield is a hyperfield analogue of the tropical semiring described in \cite{MS}. The tropical hyperfield can also be defined using \emph{min} instead of \emph{max}, and $\{\infty\}$ instead of $\{-\infty\}$, which yields an isomorphic structure. For further discussions of the tropical hyperfield and demonstrations of its usefulness, see both \cite{VIR} and \cite{VIRa}.
\end{example}

\noindent The next example is a key part of the results outlined in the following sections. The main purpose it has is as a candidate to replace the valued field in Kapranov's theorem. 

\begin{example}
The tropical complex hyperfield is usually denoted $\T\C$ and has the complex numbers $\C$ as its underlying set. The standard complex multiplication is given to $\T\C$. The hyperaddition is defined in the following way for all $z,w \in \C$:
\begin{equation} 
z \boxplus w = \nonumber
\begin{cases}
\{ c \in \C : |c| \le z \}, & \quad \text{if} \quad  w = -z.\\
z, & \quad \text{if}\quad |z| > |w|.\\
w, & \quad \text{if} \quad |w| >|z|.\\
\text{Shortest arc connecting}\; z \; \text{and} \; w,\; \text{with radius}\; |z|, & \quad \text{if} \, |z|=|w|, \, z\neq \pm w. 
\end{cases}
\end{equation}
\begin{figure}
\begin{center}
    \includegraphics{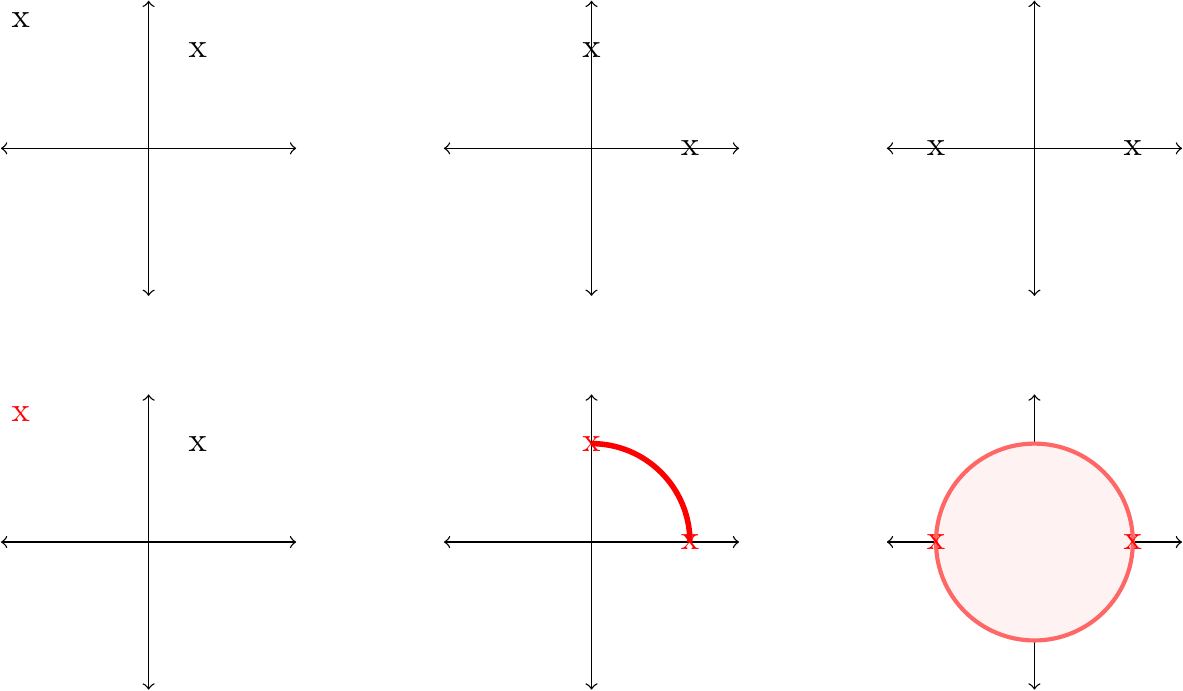}
    \caption{hyperaddition in $\T\C$ - Where the first row is two points in $\T\C$ and the bottom row represents the outcome of the addition in red.}
    \label{HypAddTC}
    \end{center}
\end{figure}
With this hyperaddition $\T\C$ is a hyperfield. The additive neutral element $\0$ of $\T\C$ is $0$ and the multiplicative neutral element $\1$ of $\T\C$ is $1$. This hyperfield is discussed in \cite[example 9]{AD} and was introduced in \cite[ection 6]{VIR} where it is described as the dequantization of the field of complex numbers. For further intuition as to the behaviour of the hyperaddition of $\T\C$ see Figure 1 and \cite[figure 1 in section 6]{VIR}.
\end{example}

\begin{example}
A field $K$ can be viewed as a hyperfield in a trivial manner, where the hyperaddition is defined as $x \boxplus y = \{x+y\}$. 
\end{example}

\begin{example}
The Krasner hyperfield has the underlying set $\{0,1\}$, and it is usually denoted $\K$. The multiplication for $\K$ is defined as:
$$0 \odot 0 = 0, \, 1 \odot 1 = 1, \, 0 \odot 1 = 0. $$
The hyperaddition for $\K$ is defined as:
$$ 0 \boxplus x = x \boxplus 0 = \{ x \} \quad \text{for} \quad x = 0 , 1$$ 
$$ 1 \boxplus 1 = \{ 0 , 1\}.$$
The additive neutral element $\0$ of $\K$ is 0 and the multiplicative neutral element $\1$ of $\K$ is 1.
\end{example}

\begin{example}
The hyperfield of signs has the underlying set $\{-1,0,1\}$, and it is usually denoted by $\S$. The multiplication is the restriction of the usual multiplication on the real numbers and the hyperaddition defined as:
$$ 0 \boxplus 0 = 0, \quad 0 \boxplus 1 = 1, \quad 0 \boxplus -1 = -1,$$
$$ 1 \boxplus 1 = 1, \quad -1 \boxplus -1 = -1,$$
$$ 1 \boxplus -1 = \{0 , 1 , -1\}.$$
The additive neutral element $\0$ of $\S$ is 0 and the multiplicative neutral element $\1$ of $\S$ is 1.
\end{example}

\begin{example}
The phase hyperfield, usually denoted by $\P$, has the underlying set of $S^1 \cup \{0\}$, where $S^1 = \{ e^{i\theta} \in \C\, | 0 \le \theta < 2\pi \, \}$ , which is the complex unit circle union with zero. Multiplication on $\P$ is inherited from $\C$, and the hyperaddition is defined by the rule,
$$\text{if} \quad \theta_1 = \theta_2 + \pi, \quad \text{then} \quad e^{i\theta_1} \boxplus e^{i\theta_2} = \{ 0 ,e^{i\theta_1} , e^{i\theta_2} \}.$$
$$\text{If} \quad \theta _1 < \theta_2 < \theta_1 + \pi, \quad \text{then} \quad e^{i\theta_1} \boxplus e^{i\theta_2} = \{e^{i\theta} \, | \, \theta_1 < \theta < \theta_2 \}.$$
$$\text{If} \quad \theta _2 < \theta_1 < \theta_2 + \pi, \quad \text{then} \quad e^{i\theta_1} \boxplus e^{i\theta_2} = \{e^{i\theta} \, | \, \theta_2 < \theta < \theta_1 \}.$$
The additive neutral element $\0$ of $\P$ is 0 and the multiplicative neutral element $\1$ of $\P$ is 1. See \cite{BB} and \cite{BS} for alternative but equivalent definitions of the phase hyperfield. 
\end{example}

\subsection{Hyperfield Homomorphisms}

\begin{definition}
Given hypergroups $\H_1$ and $\H_2$, with respective hyperoperations $\boxplus_1$ and $\boxplus_2$, a hypergroup homomorphism is a map $f : \H_1 \to \H_2$, such that $f(\0)=\0$ and $f(x \boxplus_1 y) \subseteq f(x) \boxplus_2 f(y)$ for all $x,y \in \H_1$.
\end{definition}

\begin{definition}
Given hyperrings $\H_1$ and $\H_2$, with respective hyperoperations $\boxplus_1$ and $\boxplus_2$ and multiplication $ \odot_1$ and $\odot_2$, a homomorphism of hyperrings $f : \H_1 \to \H_2$ is a map such that:
\begin{enumerate}
\item $f(x \boxplus_1 y) \subseteq f(x) \boxplus_2 f(y)$ \quad \text{and} \quad $f(\0)=\0.$
\item $f(x \odot_1 y) = f(x) \odot_2 f(y)$ \quad \text{and} \quad $f(\1)=\1.$
\end{enumerate}
I.e., this is a homomorphism of additive hypergroups and a homomorphism of multiplicative monoids.
\end{definition} 

\begin{definition}
A map $f: \H_1 \rightarrow \H_2$ between hyperfields is a hyperfield homomorphism if it is a hyperring homomorphism. 
\end{definition}

\begin{example}\label{Ex:hyp-homo}
The following are common examples of hyperfield homomorphisms.
\begin{equation} \nonumber 
    f:\H \rightarrow \K, \quad f(x) =
    \begin{cases}
        1, & \text{if} \quad x\neq 0. \\
        0, &\text{if} \quad x=0.
    \end{cases}
\end{equation}
\begin{equation} \nonumber
    \mathit{ph}:\C \rightarrow \P, \quad \mathit{ph}(x) =
    \begin{cases}
        \frac{x}{|x|}, & \text{if} \quad x \in \C\backslash\{0\}.\\
        0, &\text{if} \quad x=0.
    \end{cases}
\end{equation}
\begin{equation} \nonumber
 \mathit{sgn}: \R \rightarrow \S, \quad \mathit{sgn}(x) = 
 \begin{cases}
 1, & \text{if} \quad  x \in \R_{>0}. \\
 -1, & \text{if} \quad x \in \R_{<0}. \\
 0, & \text{if} \quad x=0.
 \end{cases}
 \end{equation}
A discussion of these and other hypefield homomorphims can be found in \cite{LAJD}.
\end{example}

\noindent The next example will be a description of the hyperfield homomorphism  which is the basis for the results in this paper. 

\begin{example}\label{eta}
There exists a hyperfield homomorphism from the tropical complex hyperfield to the tropical hyperfield. The map is denoted $ \eta: \T\C \rightarrow \T$, and is defined as:
$$ \eta(z) : = \mathrm{log}(|z|).$$
This homomorphism has previously appeared in a slightly different guise in \cite{LAJD}. Firstly, in \cite[page 341]{LAJD} the tropical triangle hyperfield, $\T\triangle := \R_{\ge 0}$ is defined and the hyperfield homomorphism $|\cdot|: \T\C \rightarrow \T\triangle$ is given as the standard absolute value. This map can also be seen as the lower right arrow of \cite[figure 1]{LAJD}. When $\T\triangle$ is discussed in \cite{LAJD} it is stated that logarithm map, $\mathrm{log}: \T\triangle \rightarrow \R \cup \{-\infty\}$, induces a hyperfield structure on $\R \cup \{-\infty\}$. This hyperfield structure is precisely the tropical hyperfield, denoted in this work as $\T$, which is defined in Example \ref{Ex:trop-hyp}. The motivation for working with the composition map $ \eta(z) : = \mathrm{log}(|z|)$ is that the tropical hyperfield is the multivalued analogue of the tropical semiring, and therefore has been studied in more detail. The tropical triangle hyperfield is isomorphic to the tropical hyperfield under the logarithm map, but has very few mentions in the literature. Moreover, because of the similarities between the tropical semiring and the tropical hyperfield, it is more natural to be discussing a generalised version of Kapranov's theorem utilising $\T$ rather than $\T\triangle$. 
\end{example}

\subsection{Polynomials over Hyperfields}

This section will describe how polynomials and their roots are defined over hyperfields.

\begin{definition}
The set of polynomials in $n$-variables over a hyperfield $\H$ will be denoted $\H[X_1 \, , \dots , \, X_n]$, where elements of this set are defined as: 
\begin{equation}\label{hyperpolynomial}
  \foo_I c_I \odot X^{i_1} \odot \, \cdots \, \odot X^{ i_n} = \foo_I c_I \odot \underline{X}^I,
\end{equation}
where multi-index notation is used and $I = (i_1 \, , \dots , \, i_n) \subset \Z^n$ and $c_I \in \H$.
\end{definition}

\begin{remark}
Note that the notation $\H[X_1 \, , \dots , \, X_n]$ is used only to denote the \emph{set} of polynomials over $\H$. In general there is no ring or hyperring structure on $\H[X_1 \, , \dots , \, X_n]$, unlike the specialised case where $\H$ is a field. Even in the univariate case it is easy to see that the set of polynomials is not a hyperring; the multivalued nature of the addition in $\H$ combined with the distributivity leads to products of polynomials also being multivalued. For example, $(aX \boxplus b)\odot(cX \boxplus d) \subseteq acX^2 \boxplus (ad \boxplus bc)X \boxplus bd.$ The coefficient $(ad \boxplus bc)$ is not necessarily single valued, which shows that multiplication of polynomials is multivalued, hence $\H[X_1 \, , \dots , \, X_n]$ is not a hyperring. See \cite[remark 4.4 and example 4.13]{JJ} for an explicit example where the behaviour of $\H[X_1 \, , \dots , \, X_n]$ is more controllable.
\end{remark}

One difference to note between polynomials defined over a field and those defined over a hyperfield occurs when the polynomial is evaluated at an element of $\H$. When a polynomial defined over a hyperfield is evaluated at an element, the output, unlike over a field, can be a set of elements. This leads to the following notion of a root of a polynomial over $\H$. 

\begin{definition}
Let $p = \foo_I c_I \odot \underline{X}^I \in \H[\underline{X}]$. An element $\underline{a} = (a_1 \, , \dots , \, a_n)$ is a \emph{root} of the polynomial if $\0 \in p(\underline{a}) =  \foo_I c_I \odot \underline{a}^I$.\\
\noindent This allows for a natural definition of the variety of $p(X_1 \, , \dots , \, X_n)$ as,
$$ V(p) : = \{ \underline{a} = (a_1 \, , \dots , \, a_n) \in \H^n \, | \, \0 \in p(\underline{a})\}. $$
\end{definition}

\noindent The next definition recalls the notion of the multiplicity of a root for univariate polynomials defined over hyperfields.  

\begin{definition}(\cite[Def. 1.5]{BL})
Let $p(X) \in \H[X]$, the multiplicity of an element $a \in \H$ is denoted $\mathit{mult}_a(p)$ and defined as,
\begin{equation}\label{multidef}
\mathit{mult}_a(p) = 
\begin{cases}
0, \quad \text{if} \, a \in \H \, \text{is not a root.}\\ 
1 + \mathrm{max}\{ \mathit{mult}_a(q), \, \text{if} \quad \exists \, q \in \H[X],\, \text{s.t.} \, p \in (X \boxplus -a) \odot q(X)\}.
\end{cases}
\end{equation}
\end{definition}

\noindent Because of the multivalued nature of multiplication of polynomials with coefficients in a hyperfield, the polynomial $q(X)$ in $\eqref{multidef}$ is not necessarily unique. This is the motivation behind the recursive definition of the multiplicity. See \cite{BL} for details on the original definition and examples of the non-uniqueness. 

\begin{example}
\begin{enumerate}
\item If $p(X) = X^2 \boxplus X \boxplus 1 \in \K[X]$, then $1 \in V(p)$, as $$p(1) = 1 \boxplus 1 \boxplus 1 =\K \ni \0.$$
\item If $p(X) = X^2 \boxplus X \boxplus -1 \in \S[X]$, then $-1 \in V(p)$, as $$p(-1) = 1 \boxplus -1 \boxplus -1 = \S \ni \0.$$
\end{enumerate}
\end{example}

\begin{remark}
There is a complete description of roots and corresponding multiplicities for univariate polynomials over $\K$, $\S$ and $\T$ in \cite{BL}, where the results are used to demonstrate proofs of Descartes' Rule of Signs and Newton's Polygon Rule. 
\end{remark}

\noindent Now that polynomials have been defined, their structure can be combined with hyperfield homomorphisms to describe an induced map of polynomials over hyperfields. 

\begin{definition}
Let $f:\H_1 \rightarrow \H_2$ be a hyperfield homomorphism. This induces a map from polynomials with coefficients in $\H_1$ to polynomials with coefficients in $\H_2$. This map is denoted $f_* : \H_1[X_1 \, , \dots , \, X_n] \rightarrow \H_2[X_1 \, , \dots , \, X_n]$, and is defined for $p = \foo_I c_I \odot \underline{X}^I \in \H_1[X_1 \, , \dots , \, X_n]$ as;
$$ f_*(p)  = \foo_I f(c_I) \odot \underline{X}^I \in \H_2[X_1 \, , \dots , \, X_n].$$
(Note: the hyperoperations are now the operations over $\H_2$, and $f_*(p)$ will be called the push-forward of $p$.)
\end{definition}

\begin{example}
Take the polynomial $p = 4X^2 -5X +1 \in \R[X]$, the hyperfield homomorphism $\mathit{sgn}:\R\rightarrow\S$ induces the map $\mathit{sgn}_*: \R[X] \rightarrow \S[X]$, which gives $\mathit{sgn}_*(p) = X^2 \boxplus -X \boxplus 1 \in \S[X]$.
\end{example}

\noindent Note that roots of $p \in \H_1[X]$ push-forward to roots of $f_*(p) \in \H_2[X]$. This is precisely described in Lemma \ref{pushforwardgeneral}. Next the definition of a \emph{relatively algebraically closed} hyperfield homomorphism is introduced.

\begin{definition}\label{RACdef}
Let $f:\H_1 \rightarrow \H_2$ be a hyperfield homomorphism, with induced map $f_*: \H_1[X] \rightarrow \H_2[X]$. We say that $f$ is \emph{relatively algebraically closed} (RAC) if for all univariate polynomials  $p \in \H_1[X]$ and every root $b \in V(f_*(p))$, there exists $a \in f^{-1}(b)$ such that $a \in V(p)$.
\end{definition}

\noindent The map $f:\H_1 \rightarrow \H_2$ being RAC has the  immediate consequence that $V(f_*(p)) \subseteq f(V(p))$ for all  $p \in \H_1[X]$. Then, combing with Lemma \ref{pushforwardgeneral}, $V(f_*(p))= f(V(p))$.

\begin{example}
    The RAC property is defined for all univariate polynomials over $\H_1$, and does not require the map $f:\H_1 \rightarrow \H_2$ to be surjective. To demonstrate, consider a transcendental field extension $\mathbb{Q} \rightarrow \mathbb{Q}(\alpha)$. The inclusion map $i:\mathbb{Q} \hookrightarrow \mathbb{Q}(\alpha)$ is RAC but clearly not surjective. 
\end{example}

\begin{definition}
A hyperfield $\H$ is called \emph{algebraically closed} if every univariate polynomial has a root in $\H$. 
\end{definition}

The Krasner hyperfield $\K$ is algebraically closed, by \cite[Remark 1.11]{BL}. Thus, a hyperfield $\H$ is algebraically closed if and only if the canonical map (from Example \ref{Ex:hyp-homo}) $f:\H \rightarrow \K$ is RAC. \\

The following example outlines a RAC map which is the main motivating example for this paper. It is the map that is studied in tropical geometry, and is the basis for the generalisation to hyperfield homomorpshisms. 

\begin{example}\label{trop}
Let $K$ be an algebraically closed field with surjective valuation, $\mathit{trop}: K \rightarrow \R \cup\{-\infty\}$. This is RAC hyperfield homomorphism.  This is the underlying structure investigated when discussing valuations and tropicalisation maps in relation to tropical geometry. See \cite[section 2.1 and theorem 3.1.3]{MS} for a more detailed description. 
\end{example}

\begin{example}
Take the map $\mathit{sgn}: \R \rightarrow \S$, and $p = X^2 -X +1 \in \R[X]$. Then, $\mathit{sgn}_*(p) = X^2 \boxplus -X \boxplus 1 \in \S[X]$. The polynomial $p$ has an empty variety, whereas $\mathit{sgn}_*(p)(1) = 1 \boxplus -1 \boxplus 1 = \S$, so $1 \in V(\mathit{sgn}_*(p))$.
This demonstrates that the map $\R \rightarrow \S$ is not a RAC map. 
\end{example}

\begin{example}
Take the map $\mathit{ph}:\C \rightarrow \P$, and the polynomial $p = X^2 + X +1 \in \C[X]$. It is shown in \cite[Remark 1.10]{BL}, that $\mathit{ph}_*(p)$ has a root at each $a =e^{i\theta}$ for all $\pi / 2 < \theta < 3\pi /2$. Not every element of this set can be lifted and hence $\mathit{ph}:\C \rightarrow \P$ is not a RAC map. 
\end{example}

\subsection{Connecting RAC to $\T\C$}

\noindent The remainder of the section will be focused on demonstrating that the map $\eta: \T\C \rightarrow \T$ satisfies the RAC property.

\begin{proposition}\label{TCroots}
Let $p = \foo_{i=0}^n c_i \odot X^i \in \T\C[X]$. An element $a \in \T\C$ is a root of $p$ if there exist $j_1 \, , \dots , \, j_m \in \{1 \, , \dots , \, n\}$ such that 
$$ |c_{j_1}a^{j_1}| = \dots = |c_{j_m}a^{j_m}| > |c_ia^i| \quad \forall i \not \in \{j_1 \, , \dots , \, j_m\} \quad
\text{and} \quad -c_0 \in \foo_{k=1}^m c_{j_k} \odot a^{j_k}.$$
\end{proposition}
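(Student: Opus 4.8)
The plan is to unpack the definition of a root over $\T\C$ directly from the hyperaddition rule and track which terms in the sum $\foo_{i=0}^n c_i \odot a^i$ can contribute $\0 = 0$ to the output set. First I would observe that for a single term, $|c_i a^i|$ is just a nonnegative real number, so the sum $\foo_{i=0}^n c_i \odot a^i$ is an iterated hyperaddition of complex numbers whose moduli are these values. The key structural fact about the $\T\C$ hyperaddition is that it is "max-dominated": if one summand has strictly larger modulus than all the others combined, the result is forced to be that summand, and $0$ appears in a hypersum of two elements precisely when they are antipodal (i.e. $w = -z$), in which case the result is the whole disc $\{c : |c| \le |z|\}$. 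So to get $0 \in p(a)$ we need the terms of maximal modulus to be able to "cancel" among themselves while the strictly smaller terms are absorbed.

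The main steps would be: (1) Let $M = \max_i |c_i a^i|$ and let $J = \{j : |c_j a^j| = M\}$ be the set of indices achieving it; the hypothesis singles out $J = \{j_1,\dots,j_m\}$ together with the index $0$, with the understanding that $|c_0| = |c_0 a^0|$ is also equal to $M$ (this is forced by $-c_0 \in \foo_{k} c_{j_k} a^{j_k}$, since every element of that hypersum has modulus $\le M$, and for $0$ to be reachable after adjoining $c_0$ we need $|c_0| \le M$; conversely the displayed condition that the $j_k$-terms strictly dominate the other $i$-terms, combined with $c_0$ entering, pins down the relevant moduli). (2) Show that adjoining all the strictly-smaller terms $c_i a^i$ with $i \notin J \cup \{0\}$ to any partial hypersum of modulus $M$ does not change that hypersum, using the "$|z| > |w| \Rightarrow z \boxplus w = z$" clause and associativity of $\boxplus$ (equations \eqref{hypass}, \eqref{hypcom}); this reduces the problem to analysing $\foo_{k=1}^m c_{j_k} a^{j_k} \boxplus c_0$. (3) Use reversibility (H2): $0 \in \left(\foo_{k=1}^m c_{j_k} a^{j_k}\right) \boxplus c_0$ if and only if $-c_0 \in \foo_{k=1}^m c_{j_k} a^{j_k}$, which is exactly the stated hypothesis. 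Assembling (2) and (3) gives $0 \in p(a)$, i.e. $a \in V(p)$.

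I expect the main obstacle to be step (2) — carefully justifying that the sub-dominant terms can be discarded from the iterated hyperaddition. Because $\boxplus$ on $\T\C$ is genuinely multivalued and the iterated sum is defined by peeling off one summand at a time against the set-valued partial sum, one has to check that at no stage does introducing a small-modulus term either shrink the reachable set below what is needed or enlarge moduli; the arc clause (equal moduli, not antipodal) needs separate attention since several of the dominant $j_k$-terms may share modulus $M$, and their partial hypersums are arcs of radius $M$, to which a term of modulus $< M$ is adjoined. The cleanest route is probably to prove a small lemma: for $z \in \T\C$ and any finite set $S \subseteq \T\C$ with $\max_{s \in S}|s| < |z|$, one has $z \boxplus \left(\foo_{s \in S} s\right) = z$ when the partial sum is a single point, and more generally that adjoining such $S$ to a subset $A \subseteq \{c : |c| = M\}$ (an arc or a point, with $M > \max_S |s|$) leaves $A$ unchanged — then apply it with $z$ ranging over the dominant terms and $S$ the sub-dominant ones. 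A secondary subtlety is the edge case $m = 1$: then $\foo_{k=1}^1 c_{j_1} a^{j_1} = \{c_{j_1}a^{j_1}\}$ and the condition $-c_0 \in \{c_{j_1}a^{j_1}\}$ forces $c_0 = -c_{j_1}a^{j_1}$, so $|c_0| = M$ and the antipodal clause gives $0 \in c_{j_1}a^{j_1} \boxplus c_0$; this should be flagged as consistent with the general argument rather than treated wholly separately.
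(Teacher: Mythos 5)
Your plan --- peel off the sub-dominant terms and then apply reversibility to $\foo_k c_{j_k}a^{j_k}\boxplus c_0$ --- is broadly the same as the paper's brief argument, but the absorption lemma you propose in step (2) is aimed at the wrong target, and there is also a misreading of the hypothesis. On the misreading: since the $j_k$ range over $\{1,\ldots,n\}$, the index $0$ is not among them, so the displayed strict inequality applies with $i=0$ and gives $|c_0| < M := |c_{j_1}a^{j_1}|$, not $|c_0| = M$ as you assert. Your parenthetical argument only yields $|c_0|\le M$, and the claim that the displayed condition ``pins down'' $|c_0|$ to equal $M$ gets the direction of that inequality backwards.

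On the gap: with $|c_0| < M$, the hypothesis $-c_0 \in S := \foo_{k=1}^m c_{j_k}a^{j_k}$ forces $S$ to contain an element of modulus strictly less than $M$. But a hypersum of elements all of modulus $M$ in $\T\C$ is either contained in the circle $\{|z|=M\}$ or equals the whole closed disc $\{|z|\le M\}$: at the step where a sub-$M$ element first appears one must have added $z\boxplus(-z)$ for some $z$ of modulus $M$, which already produces the full disc, and the disc never shrinks. So $S$ is the closed disc, not an arc. The lemma you propose is stated only for $A\subseteq\{|c|=M\}$ (arcs and points), so it never applies in the one case your hypotheses actually permit. The absorption fact you need instead is that the disc absorbs anything of modulus at most $M$: for $|w|\le M$ one has $\{|z|\le M\}\boxplus w=\{|z|\le M\}$, because $-w$ lies in the disc and $(-w)\boxplus w$ reproduces all moduli up to $|w|$, while every other $s\boxplus w$ stays inside the disc. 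Once that is noted the proof is quick --- $S$ is the disc, it absorbs $c_0$ and all remaining terms, so $p(a)$ is the disc and contains $\0$ --- and your reversibility observation in step (3) becomes a restatement rather than a load-bearing inference. As written, step (2) leaves the crucial case uncovered.
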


\begin{proof}
By the definition of the hyperaddition over $\T\C$ the monomial terms with the largest absolute value contribute to the hypersum when the polynomial $p$ is evaluated at $a$. Thus, by the hypothesis,
\begin{align}
     p(a) & = \foo_{i=0}^n c_i \odot a^i \nonumber \\
    & = \foo_{k=1}^m c_{j_k} \odot a ^ {j_k} \boxplus c_0, \nonumber
\end{align}
so, $\0 \in p(a)$ if and only if $-c_0 \in \foo_{k=1}^m c_{j_k} \odot a^{j_k}$.
\end{proof}

\begin{proposition}\label{prop:trop_roots}
Given $p \in \T[X]$, an element $a \in \T$ is a root of $p$ if and only if there are two or more monomial terms in $p(a)$, such that they are equal and greater than or equal to the remaining monomial terms.
\end{proposition}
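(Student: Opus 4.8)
The plan is to unwind the definition of a root over $\T$ directly, using the explicit description of $\boxplus$ on the tropical hyperfield from Example \ref{Ex:trop-hyp}. Write $p(a) = \foo_{i} c_i \odot a^i$ and let $\mu_i := c_i \odot a^i = c_i + i\cdot a$ denote the (scalar) value of the $i$-th monomial term evaluated at $a$, ignoring those $i$ with $c_i = \0 = -\infty$. Set $M := \max_i \mu_i$. The key structural fact, which I would establish first as a small lemma (or simply cite the definition of iterated hypersums), is that the hypersum of a finite collection of elements of $\T$ equals $\{M\}$ if the maximum $M$ is attained by exactly one index, and equals $\{z \in \T : z \le M\}$ (the whole ``down-set'' of $M$, including $-\infty$) if $M$ is attained by two or more indices. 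This is proved by a straightforward induction on the number of summands, peeling off one term at a time and using the two cases in the definition of $\boxplus$.

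Granting that structural fact, the proposition is immediate. \textbf{($\Leftarrow$)} If two or more monomial terms are equal and no smaller than all the others, then those terms realise the maximum $M$ with multiplicity at least two, so $p(a) = \{z : z \le M\} \ni -\infty = \0$, hence $a$ is a root. \textbf{($\Rightarrow$)} Conversely, if $a$ is a root then $\0 = -\infty \in p(a)$; since $p(a)$ is either the singleton $\{M\}$ with $M \in \R \cup \{-\infty\}$ or the down-set of $M$, and $-\infty$ can only lie in $p(a)$ in the second case (or in the degenerate case $M = -\infty$, i.e.\ all coefficients zero, which one handles separately or excludes), we conclude the maximum is attained at least twice, which is exactly the asserted condition. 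I would also remark that this recovers the standard ``tropical variety = non-differentiability locus'' criterion for the tropical semiring, tying it to \cite[Chapter 3]{MS}.

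The only real subtlety — and the step I'd treat most carefully — is the bookkeeping around the zero element $-\infty$: a monomial term is genuinely present only when its coefficient is nonzero, and one must be careful that ``two or more monomial terms that are equal and maximal'' is interpreted among the nonzero terms, and that the edge case where every coefficient is $\0$ (so $p$ is the zero polynomial and every $a$ is trivially a root, vacuously or by convention) is consistent with the statement. Everything else is a routine unravelling of definitions, so I expect the proof to be short once the hypersum structural lemma is in hand; indeed the associativity of $\boxplus$ guaranteed by the hypergroup axioms is what makes the iterated sum well-defined and lets the induction go through without fuss.
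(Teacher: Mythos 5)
Your proof is correct and takes essentially the same approach as the paper: both reduce the claim to the structural observation that a finite tropical hypersum is the singleton $\{M\}$ when the maximum is attained once and the down-set $\{z : z \le M\}$ (which contains $\0 = -\infty$) when it is attained at least twice. Your treatment of the $-\infty$ edge cases is somewhat more explicit than the paper's, which dismisses them in a parenthetical aside.
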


\begin{proof}
Evaluating $p$ at $a \in \T$ and utilising the hyperaddition over $\T$ yields:
$$p(a) = \foo_{i=0}^n c_i \odot a^i = \foo_{k=1}^m c_{j_k} \odot a^{j_k},$$
where the monomials terms $c_{j_k} \odot a^{j_k}$, indexed over $1, \dots ,m$ are all equal and greater that all other monomial terms. If $m=1$ then $p(a) = c_{j_k} \odot a^{j_k} \notni \0$, (ignoring the case where $p(a)$ is a monomial and $a = -\infty$). Whereas, if $m \ge 2$, then
$$ p(a) = c_{j_1} \odot a^{j_1} \boxplus \dots \boxplus c_{j_1} \odot a^{j_1} \ni \0$$ by definition. 
\end{proof}

\begin{lemma}\label{pullbackordering}
Given $a,b \in \T$, if  $a>b$, then for all $\alpha \in \eta^{-1}(a)$ and $\beta \in  \eta^{-1}(b)$, it holds that $|\alpha| > |\beta|$.
\end{lemma}
\begin{proof}
Taking $a,b \in \T$ such that $a>b$, if $\alpha \in \eta^{-1}(a)$, and $\beta \in  \eta^{-1}(b)$ then $\mathrm{log}(|\alpha|) = a > b = \mathrm{log}(|\beta|)$. As both the logarithm and exponential functions preserve order, $\mathrm{log}(|\alpha|) >  \mathrm{log}(|\beta|) \Rightarrow |\alpha| > |\beta|$, as required.
\end{proof}

\begin{theorem}\label{TCRAC}
The hyperfield homomorphism $\eta: \T\C \rightarrow \T$ is RAC. 
\end{theorem}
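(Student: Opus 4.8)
The plan is to take an arbitrary univariate polynomial $p = \foo_{i=0}^n c_i \odot X^i \in \T\C[X]$ and a root $b \in V(\eta_*(p))$ of the push-forward, and produce a preimage $a \in \eta^{-1}(b)$ with $\0 \in p(a)$. By Proposition \ref{prop:trop_roots}, saying $b$ is a root of $\eta_*(p) = \foo_{i=0}^n \log|c_i| \odot X^i$ means that among the tropical monomials $\log|c_i| + i\,b$ there are at least two that attain the maximum; let $j_1 < \dots < j_m$ ($m \ge 2$) be the indices achieving it. The goal is to choose the \emph{argument} of $a$ (its absolute value is forced: $|a| = e^{b}$, or $a=0$ if $b=-\infty$, which is the trivial monomial case handled separately) so that the dominant monomials $c_{j_k}\odot a^{j_k}$ hypersum to a set containing $-c_0$, and then invoke Proposition \ref{TCroots}. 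Lemma \ref{pullbackordering} guarantees that this choice of $|a|$ makes precisely the monomials indexed by $j_1,\dots,j_m$ dominant in $\T\C$ as well, matching the tropical picture, so the reduction to the dominant monomials is legitimate.

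The core of the argument is therefore a statement purely about $\T\C$: given finitely many nonzero complex numbers $d_1, \dots, d_m$ (here $d_k = c_{j_k}$) with a common ``weight'' structure, and any target $t \in \T\C$ with $|t|$ no larger than the common modulus of the dominant terms (here $t = -c_0$, and $|c_0| \le e^{b}\cdot(\text{common value})$ follows from $j_1,\dots,j_m$ being the maximisers, with the edge case $0 \in \{j_1,\dots,j_m\}$ handled directly), there exists $\zeta \in S^1$ such that $t \in \foo_{k=1}^m d_k\odot (e^{b}\zeta)^{j_k}$. Equivalently, writing $a = e^b\zeta$, I need the shortest-arc/hyperdisk structure of the iterated hypersum $\foo_k c_{j_k} a^{j_k}$ to sweep out, as $\zeta$ ranges over the circle, a family of subsets of $\C$ whose union contains the entire closed disk of radius equal to the common modulus. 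I expect to prove this by a connectedness/intermediate-value argument: the map $\zeta \mapsto c_{j_1}a^{j_1} \boxplus \dots \boxplus c_{j_m}a^{j_m}$ (pick, say, the ``first'' element or describe the arc explicitly) varies continuously, and as the phases of the individual dominant monomials rotate at different integer speeds $j_k$, two of them eventually become antipodal, at which point their hypersum already contains the full disk of the appropriate radius; absorbing the remaining dominant monomials (each of modulus $\le$ that radius) keeps $-c_0$ in the hypersum.

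More concretely, since $m\ge 2$, consider the two dominant indices $j_1 \ne j_2$. As $\zeta$ traverses $S^1$, the phase difference $\arg(c_{j_1}a^{j_1}) - \arg(c_{j_2}a^{j_2})$ changes by $(j_1-j_2)\cdot 2\pi \ne 0$, so it passes through $\pi$ for some $\zeta_0$; at that value, $c_{j_1}a^{j_1}$ and $c_{j_2}a^{j_2}$ have equal modulus $r := e^{b}\cdot(\text{common value})$ and opposite phase, hence $c_{j_1}a^{j_1}\boxplus c_{j_2}a^{j_2} = \{c\in\C : |c|\le r\}$, the full closed disk of radius $r$. Hypersumming in the remaining dominant monomials $c_{j_3}a^{j_3},\dots,c_{j_m}a^{j_m}$, each of modulus exactly $r$: adding a modulus-$r$ element to the closed disk of radius $r$ still yields (a superset of) that disk, since for any target $c$ with $|c|\le r$ one can first peel off an element of $c\boxplus(-c_{j_k}a^{j_k})$ of modulus $\le r$ by reversibility. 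Finally $|c_0|\le r$ because $j_1,\dots,j_m$ are the maximisers of $\log|c_i|+ib$ (the case $c_0=\0$ and the degenerate monomial case being immediate), so $-c_0$ lies in this disk, and by Proposition \ref{TCroots} the element $a = e^b\zeta_0$ is a root of $p$ lying in $\eta^{-1}(b)$.

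The main obstacle I anticipate is the bookkeeping in the last paragraph: verifying carefully that iterated $\T\C$-hypersums of several modulus-$r$ terms, when one antipodal pair is present, genuinely contain the whole radius-$r$ disk, and that the definition of the iterated hyperoperation (which nests left-to-right) does not lose elements along the way. This requires a short lemma on $\T\C$ of the form ``$D_r \boxplus w \supseteq D_r$ whenever $|w|\le r$'', where $D_r$ is the closed disk of radius $r$, proved via the reversibility axiom (H2); with that in hand the rest is a continuity/IVT argument and a reduction already licensed by Lemma \ref{pullbackordering} and Proposition \ref{TCroots}.
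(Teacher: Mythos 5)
Your proposal is correct and follows the same overall strategy as the paper: identify the dominant monomial indices via Proposition~\ref{prop:trop_roots}, lift the modulus using Lemma~\ref{pullbackordering}, arrange for two dominant monomials to be antipodal so that their $\T\C$-hypersum is a full closed disk of radius $r$, observe $|c_0|\le r$, and conclude via Proposition~\ref{TCroots}. The one genuine difference is presentational but worth noting: the paper produces the antipodal pair by the explicit closed formula $\widetilde a = (-c_{t'}/c_t)^{1/(t-t')}$ and verifies the antipodality algebraically, whereas you argue its existence via an intermediate-value argument on the argument of $a$ as $\zeta$ traverses $S^1$; your $\zeta_0$ is precisely a branch of that same radical, so the two constructions coincide. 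Where your write-up is in fact more careful than the paper's is the final absorption step. The paper shows only that $c_t\widetilde a^t\boxplus c_{t'}\widetilde a^{t'}$ is the full disk $D_R$ and then directly asserts $-c_0 \in \foo_{j=1}^m c_{k_j}\odot\widetilde a^{k_j}$, without addressing the nested hypersum of the remaining $m-2$ dominant terms of modulus $R$. Your lemma that $D_r\boxplus w\supseteq D_r$ whenever $|w|\le r$, proved via reversibility (H2), is exactly the missing step, and stating and proving it explicitly is an improvement on the paper's exposition.
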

\begin{proof}
A polynomial $p = \foo_{i=0}^n c_i \odot X^i \in \T\C[X]$ has push-forward $q = \eta_*(p) = \foo_{i=0}^n \eta(c_i) \odot X^i \in \T[X]$. By Proposition \ref{prop:trop_roots}, $a \in V(q)$ if and only if there are two or more monomials terms that are equal and great than or equal than the other monomial terms in $q(a)$. For this to occur there must exist $k_1 \, , \dots , \, k_m \in \{1 \, , \dots , \, n\}$ such that, $\eta(c_{k_1}) \odot a^{k_1} = \, \dots \, = \eta(c_{k_m}) \odot a^{k_m} > \eta(c_i) \odot a^i$ for all $i \in \{1\, ,\dots,\, n\}\backslash \{k_1 \, ,\dots, \,k_m\}$. Explicitly, the maximum is achieved more than once in the terms $\eta(c_1) \odot a^1 \, , \dots , \, \eta(c_n) \odot a^n$, and this maximum is greater than or equal to $\eta(c_0)$.

\noindent As $m \ge 2$, this allows for $t,t' \in \{k_1 \, , \dots , \, k_m\}$ to be chosen to construct a lift of $a$ as follows:
$$ \widetilde{a} = \Big(\frac{-c_{t'}}{c_t} \Big)^{\frac{1}{t-t'}}.$$

\noindent To confirm that $\tilde{a}$ is a lift of $a$, apply the map $\eta$ to $\tilde{a}$ to give 
$$\eta(\tilde{a}) = \Big( \eta(-c_{t'} )\odot (\eta(c_t))^{-1}\Big)^{\frac{1}{t-t'}}.$$
Furthermore, observe that $\eta(c_{t}) \odot a^t = \eta(c_{t'}) \odot a^{t'}$. Then, 
\begin{align}
    \eta(c_{t}) \odot a^t = \eta(c_{t'}) \odot a^{t'} & \Rightarrow a^{t-t'} = \eta(c_{t'} )\odot (\eta(c_t))^{-1} \nonumber \\
    & \Rightarrow a = \Big( \eta(c_{t'} )\odot (\eta(c_t))^{-1}\Big)^{\frac{1}{t-t'}}. \nonumber 
\end{align}
As $\eta(x) = \mathrm{log}(|x|) = \mathrm{log}(|-x|) = \eta(-x)$, this shows that $a = \eta(\tilde{a})$ as required. 

\noindent It remains to shown that $\widetilde{a}$ is a root of the original polynomial $p$. Note that due to Lemma \ref{pullbackordering}, 
\begin{equation} \label{eqn:ab_val_pullback_ineq}|c_{k_1}\widetilde{a}^{k_1}| = \, \dots \, = |c_{k_m}\widetilde{a}^{k_m}| > |c_i\widetilde{a}^i|, \quad \text{for all} \quad i \not \in \{k_1 \, , \dots , \, k_m\}.
\end{equation}
Furthermore, observe the relationship between the monomial terms with $t,t'$ exponents,
\begin{align}
 c_t\widetilde{a}^t \boxplus c_{t'}\widetilde{a}^{t'} & =  c_t \Big(\Big(\frac{-c_{t'}}{c_t} \Big)^{\frac{1}{t-t'}} \Big)^t \boxplus c_{t'} \Big(\Big(\frac{-c_{t'}}{c_t} \Big)^{\frac{1}{t-t'}} \Big)^{t'} \nonumber \\
 & = (-1)^{\frac{t}{t-t'}} (c_t)^{\frac{-t'}{t-t'}}(c_{t'})^{\frac{t}{t-t'}} \boxplus (-1)^{\frac{t'}{t-t'}} (c_t)^{\frac{-t'}{t-t'}}(c_{t'})^{\frac{t}{t-t'}}\nonumber \\
 & = (-1)(-1)^{\frac{t'}{t-t'}} (c_t)^{\frac{-t'}{t-t'}}(c_{t'})^{\frac{t}{t-t'}} \boxplus (-1)^{\frac{t'}{t-t'}} (c_t)^{\frac{-t'}{t-t'}}(c_{t'})^{\frac{t}{t-t'}} \nonumber \\
 & = \{ z \in \C \, : \, |z| \le R\}, \quad \text{where} \quad R = |c_t\widetilde{a}^t| \geq |c_0|. \nonumber 
 \end{align}
Then, $\foo_{j=1}^m c_{k_j} \odot \widetilde{a}^{k_j} \ni -c_0$, which combined with \eqref{eqn:ab_val_pullback_ineq} and Proposition \ref{TCroots} gives that $\tilde{a} \in V(p)$.
\end{proof}

\begin{corollary}
The tropical complex hyperfield $\T\C$ is algebraically closed. 
\end{corollary}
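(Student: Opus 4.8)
The plan is to deduce this from Theorem \ref{TCRAC} together with the classical observation that the tropical hyperfield is itself algebraically closed: given a univariate polynomial $p = \foo_{i=0}^n c_i \odot X^i \in \T\C[X]$ of degree at least one, I would first produce a root of the push-forward $\eta_*(p) \in \T[X]$ and then transport it back to $\T\C$ using the RAC property. (As usual, nonzero constant polynomials have no root in any hyperfield, so ``algebraically closed'' is understood for polynomials of degree $\geq 1$.) The degenerate case where $p = c \odot X^k$ is a single monomial with $k \geq 1$ is immediate: by the absorption rule $p(\0) = c \odot \0^k = \0$, so $\0 \in V(p)$. Hence I may assume $p$ has at least two nonzero coefficients, and since $\eta(z) = \log|z| = \0$ exactly when $z = 0$, the push-forward $q := \eta_*(p)$ has the same support as $p$ and therefore also at least two nonzero coefficients.

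Next I would show $V(q) \neq \emptyset$. For a real argument $a$ each nonzero monomial of $q$ evaluates to $d_i \odot a^i = d_i + i a$, an affine function of $a$ of slope $i$, so $a \mapsto \max_i(d_i + ia)$ is a convex piecewise-linear function whose slope set has at least two elements; it therefore has a breakpoint $a_0 \in \R$, at which the maximum over the monomials of $q$ is attained by at least two terms with distinct exponents, the remaining monomials (in particular a possibly-absent constant term, which is $\0$) being no larger. By Proposition \ref{prop:trop_roots} this says precisely that $a_0 \in V(q)$.

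Finally, since $\eta$ is RAC by Theorem \ref{TCRAC}, the root $a_0 \in V(\eta_*(p))$ lifts to some $\widetilde a \in \eta^{-1}(a_0)$ with $\widetilde a \in V(p)$, so $p$ has a root in $\T\C$; as $p$ was arbitrary of degree $\geq 1$, $\T\C$ is algebraically closed. I do not expect a real obstacle here: beyond citing Theorem \ref{TCRAC}, the only content is the elementary convexity argument that tropical univariate polynomials have roots, and the support bookkeeping and the monomial case are routine. Equivalently, one can note that the canonical map $\T\C \to \K$ factors as $\T\C \xrightarrow{\eta} \T \to \K$, a composite of RAC maps (RAC maps compose, by chasing the lifting property successively through $f_*$ and $g_*$), and then invoke the stated equivalence ``$\H$ algebraically closed $\iff$ the canonical map $\H \to \K$ is RAC''.
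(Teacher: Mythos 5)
Your argument is correct and follows essentially the same route as the paper: establish that $\T$ is algebraically closed and then use the RAC property of $\eta$ (Theorem~\ref{TCRAC}) to lift a root of $\eta_*(p)$ back to a root of $p$. The only difference is that the paper cites the classical algebraic closure of the tropical semiring for the first step, whereas you supply a short self-contained convexity (breakpoint) argument, take care of the monomial and support bookkeeping explicitly, and in passing note the equivalent route through the composite $\T\C \xrightarrow{\eta} \T \rightarrow \K$ of RAC maps together with the paper's observation that a hyperfield is algebraically closed precisely when its canonical map to $\K$ is RAC.
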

\begin{proof}
It is a classical result in tropical geometry that the tropical semiring is algebraically closed, see for example \cite{GW}. This is trivially equivalent to the statement that the hyperfield $\T$ is algebraically closed. As the map $\eta: \T\C \rightarrow \T$ is RAC and surjective, every polynomial $p \in \T\C[X]$ has a corresponding polynomial $\eta_*(p) \in \T[X]$, which has a root and can be lifted back to a root of $p$. Hence, every polynomial in $\T\C[X]$ has a root.
\end{proof}

\begin{remark}
    The results outlined in \cite{BL} regarding root multiplicities over $\T$ and $\K$ demonstrate from a hyperfield perspective why they are both algebraically closed.
\end{remark}

\begin{example}
Take the polynomial $p = i \odot X^2 \boxplus \Big( \frac{-1 +i\sqrt{3}}{2}\Big ) \odot X \boxplus -1 \in \T\C[X]$, then the push-forward is $\eta_*(p) = 0 \odot X^2 \boxplus 0 \odot X \boxplus 0 \in \T[X]$. It can be seen that $0 \in V(\eta_*(p))$. In accordance with the proof of Theorem \ref{TCRAC}, take $t = 2$ and $t' =1$. This gives,
$$ c_t = c_2 = i, \quad- c_{t'} = \textcolor{teal}{-}c_1 = \Big( \frac{1 -i\sqrt{3}}{2}\Big ), \quad 1/(t-t') = 1$$ 
Taking the template for the lift, 
$$ \widetilde{a} = \Big(\frac{-c_{t'}}{c_t} \Big)^{\frac{1}{t-t'}} = \Big( \frac{1-i\sqrt{3}}{2i}\Big). $$
Now to confirm that this is a pull back of $0$ and is a root of $p(X)$:
$$ \Big | \Big( \frac{1-i\sqrt{3}}{2i}\Big) \Big| = \frac{1}{2} |1-i\sqrt{3}| = 1 \Rightarrow f\Big( \frac{1-i\sqrt{3}}{2i}\Big) = 0.$$
Finally, 
\begin{align}
p(\widetilde{a}) = p\Big( \frac{1-i\sqrt{3}}{2i}\Big) & = i \Big( \frac{1-i\sqrt{3}}{2i}\Big)^2 \boxplus \Big( \frac{-1 +i\sqrt{3}}{2}\Big )\Big( \frac{1-i\sqrt{3}}{2i}\Big) \boxplus -1 \nonumber \\
& = \Big( \frac{i-\sqrt{3}}{2}\Big ) \boxplus \Big( \frac{1 +i\sqrt{3}}{2i}\Big ) \boxplus -1 \nonumber \\
& = \Big( \frac{i-\sqrt{3}}{2}\Big ) \boxplus -\Big( \frac{i - \sqrt{3}}{2}\Big ) \boxplus -1 \nonumber \\
& = \{ z \in \C \, : \, |z| \le 1\} \boxplus -1 \ni \0. \nonumber 
\end{align}
This shows that $\widetilde{a}$ is a lifted root, which demonstrates the application of the structure of the proof of Theorem \ref{TCRAC}.
\end{example}

\section{A Generalisation of Kapranov's Theorem}\label{KapsThm}

\noindent This section will present a specific generalisation of Kapranov's Theorem over hyperfields. The theorem by Kapranov \cite[theorem 3.1.3]{MS} is a key result in tropical geometry, which leads to the Fundamental Theorem of tropical geometry in \cite{MS}. Recall the statement of Kapranov's Theorem in tropical geometry. 

\begin{theorem}
Given an algebraically closed field $K$ with surjective valuation denoted $\mathit{trop}: K \rightarrow \R \cup \{-\infty\}$, for a Laurent polynomial $p = \sum_{I \in \Z^n} c_{I}X^{I} \in K[X_1^{\pm 1} \, , \dots , \, X_n^{\pm 1}]$, 
$$ V(\mathit{trop}(p)) = \mathit{trop}(V(p)).$$
(For further details, see \cite[Theorem 3.1.3]{MS}).
\end{theorem}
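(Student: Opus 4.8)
The plan is to obtain the classical statement as the instance of the generalised Kapranov theorem (the main theorem of this paper) attached to the map $\mathit{trop}\colon K\to\T$; Laurent monomials cause no trouble here, since polynomials over a hyperfield are already permitted to carry exponents in $\Z^n$. Two things then have to be checked. First, that $\mathit{trop}$ really is a hyperfield homomorphism: this is essentially a repackaging of the valuation axioms, with $\mathit{trop}(xy)=\mathit{trop}(x)+\mathit{trop}(y)$ being multiplicativity, $\mathit{trop}(0)=-\infty=\0$ and $\mathit{trop}(1)=0=\1$ the normalisations, and the containment $\mathit{trop}(x+y)\in\mathit{trop}(x)\boxplus\mathit{trop}(y)$ being exactly the ultrametric inequality $v(x+y)\ge\min\{v(x),v(y)\}$ read against the definition of $\boxplus$ on $\T$ (equality of $v(x),v(y)$ landing in the interval branch $\{z\le v(x)\}\cup\{-\infty\}$, which contains $\0$, and strict inequality landing in the single-valued branch).

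Second, and this is the substance, that $\mathit{trop}$ is RAC; I would prove this independently rather than quote \cite{MS}, to avoid circularity with Example \ref{trop}. Let $p=\foo_{i}c_i\odot X^i\in K[X]$ be univariate and let $w\in V(\mathit{trop}_*(p))$. By Proposition \ref{prop:trop_roots} this says that the maximum of the numbers $v(c_i)+iw$ is attained at least twice. Since $K$ is algebraically closed we may factor $p=c\prod_j(X-\alpha_j)$, and the Newton polygon of $p$ with respect to $v$ has slopes equal (up to sign) to the valuations $v(\alpha_j)$, with multiplicities given by horizontal edge lengths; the condition on $w$ is precisely that $w$ occurs as such a slope, hence some root $\alpha_j$ satisfies $v(\alpha_j)=w$, and $a:=\alpha_j$ is an element of $\mathit{trop}^{-1}(w)\cap V(p)$, the required lift. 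Surjectivity of $\mathit{trop}$ is used only to guarantee that prospective tropical roots are genuinely attainable; the factorisation already exhibits a concrete preimage. With $\mathit{trop}$ a RAC hyperfield homomorphism into $\T$, the generalised theorem then yields $V(\mathit{trop}(p))=\mathit{trop}(V(p))$ for every $p\in K[X_1,\dots,X_n]$, which is the claim.

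I expect the only genuine obstacle to be bookkeeping-in-disguise: the univariate lifting is a classical Newton-polygon computation (equivalently, one can transplant the explicit radical-lift argument of Theorem \ref{TCRAC} from $\T\C$ to $K$, replacing the radical by an honest root of the appropriate truncation of $p$), and the passage from one variable to $n$ variables is then carried entirely by the generalised theorem. The one thing to be careful about is precisely this non-circularity point — the citation in Example \ref{trop} asserts that $\mathit{trop}$ is RAC by invoking the theorem under discussion — so a self-contained treatment must route through the Newton polygon; if one is happy to cite \cite[Theorem 3.1.3]{MS} for the RAC property, the proof reduces to the single observation that $\mathit{trop}$ is a RAC hyperfield homomorphism into $\T$, after which the generalised theorem applies verbatim.
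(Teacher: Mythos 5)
The paper does not prove this theorem at all: it is stated purely as a recall, with a citation to \cite[Theorem 3.1.3]{MS} standing in for the argument, and the later remark at the end of Section \ref{KapsThm} says only that the generalised theorem together with Example \ref{trop} ``encompasses'' the classical one. Your proposal is therefore doing genuinely more work than the paper, and the extra work is warranted. You have correctly spotted that the paper's route is circular as written: Example \ref{trop} asserts that $\mathit{trop}\colon K\to\T$ is RAC by pointing back to \cite[Theorem 3.1.3]{MS}, which is exactly the statement one is trying to recover.

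Your resolution is sound. Establishing that $\mathit{trop}$ is a hyperfield homomorphism is a direct translation of the valuation axioms, and your Newton-polygon argument for the RAC property is the standard and correct way to see that every tropical root $w$ of $\mathit{trop}_*(p)$, for $p\in K[X]$ univariate, is the valuation of an honest root of $p$: the maximum (or minimum, depending on sign convention) of $\mathit{trop}(c_i)+iw$ being attained at least twice is precisely the statement that the Newton polygon has an edge of the corresponding slope, and over an algebraically closed valued field the multiset of root valuations is read off from those edges. Once $\mathit{trop}$ is known to be RAC, the generalised Kapranov theorem in Section \ref{KapsThm} supplies the passage to $n$ variables (including Laurent exponents, since the paper's polynomial definition already permits $I\subset\Z^n$). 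Two small points worth tightening in a final write-up: be careful about the sign in the ultrametric inequality versus the max-plus hyperaddition on $\T$ (the paper's $\T$ uses $\max$, so $\mathit{trop}$ plays the role of $-v$ for a classical valuation $v$); and note that surjectivity of $\mathit{trop}$ is genuinely used in the $n$-variable step of the generalised theorem, where one must choose lifts $\lambda_i\in f^{-1}(a_i)$ --- the univariate Newton-polygon step alone does not need it. What your approach buys is a self-contained, non-circular proof; what the paper's citation buys is brevity at the cost of logical dependence on \cite{MS}.
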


\noindent An essential point to recognise is that replacing the valuation with an arbitrary hyperfield homomorphism, one containment holds automatically.

\begin{lemma}\label{pushforwardgeneral}
Let $f: \H_1 \rightarrow \H_2$ be a hyperfield homomorphism. For $p = \foo_{I} c_{I} \odot \underline{X}^{I} \in \H_1[X_1 \, , \dots , \, X_n]$, 
$$ f(V(p)) \subseteq V(f_*(p)).$$
\end{lemma}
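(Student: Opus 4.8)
The plan is to show that if $\underline{a} \in V(p)$ then $f(\underline{a}) \in V(f_*(p))$, i.e. that $\0 \in p(\underline{a})$ implies $\0 \in f_*(p)(f(\underline{a}))$. The key point is that the multivalued homomorphism inequality $f(x \boxplus_1 y) \subseteq f(x) \boxplus_2 f(y)$, together with multiplicativity $f(x \odot_1 y) = f(x) \odot_2 f(y)$ and $f(\0) = \0$, propagates through the evaluation of a polynomial at a point.

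First I would recall that for $p = \foo_I c_I \odot \underline{X}^I$, evaluating at $\underline{a}$ gives $p(\underline{a}) = \foo_I c_I \odot \underline{a}^I$, a hypersum of finitely many elements $d_I := c_I \odot \underline{a}^{I} \in \H_1$. Similarly $f_*(p)(f(\underline{a})) = \foo_I f(c_I) \odot f(\underline{a})^I$, and by multiplicativity of $f$ each term equals $f(c_I \odot \underline{a}^I) = f(d_I)$. So the statement reduces to the claim: for any finite list $d_1, \dots, d_k \in \H_1$, $f(d_1 \boxplus_1 \cdots \boxplus_1 d_k) \subseteq f(d_1) \boxplus_2 \cdots \boxplus_2 f(d_k)$. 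This I would prove by induction on $k$ using the recursive definition of the iterated hypersum given in the Background section: the base case $k = 1$ is trivial, and the inductive step unwinds $d_1 \boxplus \cdots \boxplus d_k = \bigcup_{x' \in d_2 \boxplus \cdots \boxplus d_k} (d_1 \boxplus x')$, applies the single-addition containment $f(d_1 \boxplus_1 x') \subseteq f(d_1) \boxplus_2 f(x')$ together with the inductive hypothesis $f(x') \in f(d_2) \boxplus_2 \cdots \boxplus_2 f(d_k)$, and uses that $\boxplus_2$ on subsets is the union of pointwise hypersums together with associativity to conclude. A small point to be careful about: $f$ of a union is the union of the images, and $f(d_1) \boxplus_2 S = \bigcup_{s \in S}(f(d_1) \boxplus_2 s)$ for any subset $S$, so monotonicity of $\boxplus_2$ in its set argument does the bookkeeping.

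Finally, combining these: since $\underline{a} \in V(p)$ means $\0 \in p(\underline{a}) = f^{-1}$-side hypersum of the $d_I$, applying $f$ gives $\0 = f(\0) \in f\big(\foo_I d_I\big) \subseteq \foo_I f(d_I) = f_*(p)(f(\underline{a}))$, so $f(\underline{a}) \in V(f_*(p))$. Since $\underline{a}$ was arbitrary, $f(V(p)) \subseteq V(f_*(p))$.

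The main obstacle, such as it is, is purely notational rather than conceptual: one must handle the iterated hyperoperation carefully, since $\boxplus$ is only a binary hyperoperation extended to strings via the recursive union formula, and associativity/commutativity are needed to rearrange terms and to make the induction on the number of monomials go through cleanly. There is no deep difficulty — the containment direction is exactly the ``easy half'' of Kapranov's theorem — but writing the induction so that the set-valued arguments are tracked correctly is the one place where care is required.
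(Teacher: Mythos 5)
Your proposal is correct and follows essentially the same route as the paper: use multiplicativity to rewrite each term $f(c_I) \odot f(\underline{a})^I$ as $f(c_I \odot \underline{a}^I)$, use the (iterated) hypergroup homomorphism containment to pass $f$ inside the hypersum, and finish with $f(\0) = \0$. The only difference is one of detail level: the paper simply asserts $\foo_I f(c_I \odot \underline{a}^I) \supseteq f\bigl(\foo_I c_I \odot \underline{a}^I\bigr)$ in a single display step, whereas you explicitly isolate this as the key claim and indicate the induction on the number of summands needed to extend the binary containment $f(x \boxplus_1 y) \subseteq f(x) \boxplus_2 f(y)$ to $k$-fold hypersums, which is a worthwhile clarification but not a different argument.
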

\begin{proof}
By definition $ f_*(p) = \foo_{I} f(c_{I}) \odot \underline{X}^{I}  \in \H_2[X_1 \, , \dots , \, X_n]$. Let $\underline{a} = (a_1 \, , \dots , \,a_n) \in \H^n_1$ be a root of $p$, meaning $\underline{a} \in V(p)$ and 
\begin{equation}
\0 \in p(\underline{a}) =p(a_1 \, , \dots , \, a_n) = \foo_{I} c_{I} \odot \underline{a}^{I}.
\end{equation} 
The aim is to demonstrate that $f(\underline{a}) \in V(f_*(p))$ holds. Firstly, 
\begin{align}
f_*(p)(f(\underline{a})) & =  \foo_{I} f(c_{I}) \odot f(\underline{a})^{I} \nonumber \\
& = \foo_{I} f(c_{I} \odot \underline{a}^{I}) \nonumber \\
& \supseteq f \Big ( \foo_{I} c_{I} \odot \underline{a}^{I}\Big) \nonumber \\
& \ni f(\0) = \0 \nonumber 
\end{align}
The above steps use the properties of a hyperfield homomorphism to give that $\0 \in f(p)(f(\underline{a}))$, yielding $f(\underline{a}) \in V(f_*(p))$. Hence, $f(V(p)) \subseteq V(f_*(p))$ holds. 
\end{proof}

\noindent The main result of the paper is to specifically generalise Kapranov's Theorem to hyperfield homomorphisms $f:\H \rightarrow \T$, which satisfy the RAC property. The following theorem demonstrates that the RAC property for a hyperfield homomorphism $\H \rightarrow \T$ can be used to deduce the existence of lifts of roots for polynomials in $n$-variables. 

\begin{theorem}[Generalised Kapranov's Theorem]
Given a polynomial $p \in \H[X_1 \, , \dots , \, X_n]$ and a RAC hyperfield homomorphism $f :\H \rightarrow \T$, 
$$ V(f_*(p)) = f(V(p)).$$
\end{theorem}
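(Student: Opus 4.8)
The plan is to prove the statement by induction on the number of variables $n$, reducing the multivariate case to the univariate RAC hypothesis. The containment $f(V(p)) \subseteq V(f_*(p))$ is already Lemma \ref{pushforwardgeneral}, so the work is entirely in the reverse inclusion $V(f_*(p)) \subseteq f(V(p))$: given $\underline{b} = (b_1, \dots, b_n) \in \T^n$ with $\0 \in f_*(p)(\underline{b})$, we must produce $\underline{a} \in \H^n$ with $f(\underline{a}) = \underline{b}$ and $\0 \in p(\underline{a})$. The base case $n = 1$ is precisely the definition of RAC (Definition \ref{RACdef}), so that is free.

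For the inductive step, suppose the result holds for polynomials in $n-1$ variables and take $p \in \H[X_1, \dots, X_n]$ with a root $\underline{b} = (b_1, \dots, b_n)$ of $f_*(p)$. First I would fix the last coordinate: write $p$ as a polynomial in $X_n$ with coefficients in (the set of) polynomials in $X_1, \dots, X_{n-1}$, so that $p = \foo_{k} p_k(X_1, \dots, X_{n-1}) \odot X_n^k$. The naive move is to substitute $b_n$ for $X_n$ and obtain a polynomial in $n-1$ variables to which induction applies — but because coefficients live over a hyperfield, substitution and the induced push-forward interact set-theoretically rather than as equalities, so one has to be careful that a genuine root of the $(n-1)$-variable push-forward is produced and that lifting $b_1, \dots, b_{n-1}$ back is compatible with also lifting $b_n$. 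The cleanest route is probably to lift one coordinate at a time using the univariate RAC property in that single variable (treating the other coordinates as already-substituted scalars from $\T$), then peel off coordinates and invoke the inductive hypothesis on what remains; at each stage one uses that $f$ is a hyperfield homomorphism to pass roots of $f_*(p)$ evaluated partially at $\T$-points to roots of appropriate partial evaluations of $p$.

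The main obstacle I expect is exactly this coordinate-by-coordinate bookkeeping: because evaluating a hyperpolynomial yields a set, the identity $f_*(p)(b_1, \dots, b_n) \supseteq f\big(p(a_1, \dots, a_n)\big)$ runs the wrong way for directly transferring the root condition, so one must argue that membership of $\0$ in the pushed-forward set can be witnessed by a choice of lifts coordinate by coordinate — effectively threading the RAC property through each variable while keeping the partial evaluations consistent. A convenient simplification is that the target is specifically $\T$, where by Proposition \ref{prop:trop_roots} a root is detected by the combinatorial condition that the maximum among the monomial values is attained at least twice; this lets one reduce to finitely many monomials of $p$ and lift the coordinates so that the dominant monomials of $f_*(p)$ at $\underline{b}$ pull back to dominant monomials of $p$ whose hypersum contains $\0$, mirroring the argument in the proof of Theorem \ref{TCRAC}. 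Once the reverse containment is established coordinate-wise via induction, combining with Lemma \ref{pushforwardgeneral} yields the equality $V(f_*(p)) = f(V(p))$.
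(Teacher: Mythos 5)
Your inductive, coordinate-by-coordinate plan has a genuine gap at the very step where you ``substitute $b_n$ for $X_n$ and obtain a polynomial in $n-1$ variables.'' Over a hyperfield this substitution does not produce a single polynomial: writing $p = \foo_I c_I \odot \underline{X}^I$ and specialising only the last variable, the coefficient of $X_1^{j_1}\cdots X_{n-1}^{j_{n-1}}$ in the result would be $\foo_k c_{(j_1,\dots,j_{n-1},k)} \odot b_n^{k}$, a hypersum and hence a \emph{set}, not an element of $\T$. So there is no single $(n-1)$-variable polynomial on which to invoke the inductive hypothesis, and choosing representatives coefficient-by-coefficient does not interact predictably with the root condition (you also have the wrong-way containment $f_*(p)(\underline b) \supseteq f(p(\underline a))$, which you flag). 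You identify both obstacles yourself but do not supply a mechanism that overcomes them; ``threading the RAC property through each variable'' and ``mirroring the argument of Theorem~\ref{TCRAC}'' are not constructions, and the proof of Theorem~\ref{TCRAC} leans on concrete features of $\T\C$ that do not transfer to an arbitrary RAC source $\H$.

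The paper's proof avoids the variable-by-variable trap by reducing to the univariate case \emph{in one stroke}. Fix lifts $\lambda_i \in f^{-1}(a_i)$ of the coordinates of the given root $\underline{a}$ and pull $p$ back along the monomial curve $\varphi(x) = (\lambda_1 \odot x^{d_1}, \dots, \lambda_n \odot x^{d_n})$, where $D = (d_1,\dots,d_n) \in \Z^n$ is chosen so that the dot products $D\cdot I$ over the finitely many exponent vectors $I$ occurring in $p$ are pairwise distinct (possible since $\Z^n$ is not a finite union of hyperplanes). This genericity is exactly what prevents two monomials of $p$ from colliding after substitution, so $\varphi^* p$ is a genuine univariate polynomial --- each coefficient $c_I \odot \underline{\lambda}^I$ is a single element, never a hypersum --- and $\varphi^*p(x) = p(\varphi(x))$ holds on the nose, not merely up to set containment. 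One then computes $f_*(\varphi^*p)(0) = f_*(p)(\underline{a}) \ni \0$, applies the RAC hypothesis once to lift $0$ to a root $\widetilde a$ of $\varphi^*p$ with $f(\widetilde a)=0$, and concludes $\varphi(\widetilde a) \in V(p)$ maps under $f$ to $\underline a$. If you want to salvage an inductive scheme you would need to build in comparable genericity at every step to keep the partial specialisations single-valued; as written, the proposal is missing that key idea.
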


\begin{proof}
The inclusion $f(V(p)) \subseteq V(f_*(p))$ is a direct consequence of Lemma \ref{pushforwardgeneral}. \\

\noindent The inclusion in the reverse direction, $V(f_*(p)) \subseteq f(V(p))$, is more interesting and requires an argument. Take a point $\underline{a} \in V(f_*(p))$, so $ \0 \in f_*(p)(\underline{a})$. The aim is to demonstrate that there exists an element in $V(p)$ that pushes forward to $\underline{a}$. This will be done by restricting to univariate polynomials and using the property that $f:\H \rightarrow \T$ is a RAC map, to find an appropriate lift of $\underline{a}$.\\ 

\noindent Firstly, choose lifts $\lambda_i \in f^{-1}(a_i)$, where $\underline{a} = (a_1 \, , \dots , \, a_n) \in V(f_*(p))$ and $\underline{\lambda} = (\lambda_1 , \dots , \lambda_n) \in \H^n$. The map $f:\H \rightarrow \T$ can be used to define the coordinate wise map $F : \H^n \rightarrow  \T^n \, , \quad F(x_1, \, \dots \,, x_n):= (f(x_1) \, , \dots \,, f(x_n))$. For any non-zero $D= (d_1 , \dots  , d_n) \in \Z^n$ the map $\varphi(x):= (\lambda_1 \odot x^{d_1} \, , \dots \,, \lambda_n \odot x^{d_n})$ defines an inclusion $\varphi : \H^* \rightarrow  (\H^*)^n$, where $\H^*$ denotes $\H / \{\0\}$. Then, if the map $\psi : \T^* \rightarrow (\T^*)^n$ is defined as
\begin{align}
    \psi(x):&= (f(\lambda_1) \odot x^{d_1} \, , \dots \,, f(\lambda_n) \odot x^{d_n}) \nonumber \\
& = (f(\lambda_1) + d_1x \, , \dots \,, f(\lambda_n) +d_nx ), \nonumber
\end{align}
the diagram below is commutative: 
\begin{center}
\begin{tikzpicture}
\matrix (m) [matrix of math nodes, row sep=6em, column sep=8em, minimum width =4em]
{ \H^* & \T^* \\
(\H^*)^n &  (\T^*)^n \\};
\path[->] (m-1-1) edge [] node [above] {$f$}(m-1-2)
edge [] node [left] {$\varphi$} (m-2-1);
\path[->] (m-2-1) edge [] node [below right] {$F$}(m-2-2);
\path[->] (m-1-2) edge [] node [below right] {$\psi$}(m-2-2);
\end{tikzpicture}
\end{center}
Note that $\psi(0) = F(\underline{\lambda}) =\underline{a}$. The polynomial $p$ will be pulled back through $\varphi$ to a univariate polynomial, then pushed forward through $f_*$ and it will be shown that this polynomial has a root at $0$. The RAC property will be used to lift this root back. The pullback of $p$, denoted $\varphi^{*}p$, is the univariate polynomial defined by the expression for $p$ where $X_i$ is replaced with $\lambda_i \odot X^{d_i}$. Explicitly,
\begin{align}
\varphi^{*}p & = \foo_I c_I \odot (\lambda_1 \odot X^{d_1})^{i_1} \odot \, \dots \, \odot (\lambda_n \odot X^{d_n})^{i_n},\nonumber \\
& = \foo_I c_I \odot \underline{\lambda}^I \odot X^{D \cdot I} \in \H[X]. \nonumber
\end{align}

\noindent The pullback polynomial $\varphi^*p$ is pushed forward to $f_*(\varphi^*p) \in \T[X]$. The image of $f_*(\varphi^*p)$ is equal to the image of $f_*(p)$ when restricted to $\psi$. 
\begin{align}
f_*(p)(\psi(X)) & = f_*(p)\Big(f(\lambda_1) \odot X^{d_1} \, , \dots \, , f(\lambda_n) \odot X^{d_n}\Big) \nonumber \\
& = \foo_I f(c_I) \odot f(\underline{\lambda})^I \odot X^{D\cdot I} \nonumber \\
& = f_*(\varphi^*p)(X). \nonumber
\end{align}

\noindent The next step is to show that $f_*(\varphi^*p)$ has a root at $0 \in \T$. This can be seen as,
$$f_*(\varphi^*p)(0) =  \foo_I f(c_I) \odot f(\underline{\lambda})^I \odot 0^{D\cdot I}.$$
Then, due to the arithmetic over $\T$, 
\begin{align}
    \foo_I f(c_I) \odot f(\underline{\lambda})^I \odot 0^{D\cdot I} 
    & = \foo_I f(c_I) \odot f(\underline{\lambda})^I \nonumber \\
    & = \foo_I f(c_I) \odot \underline{a} ^I \nonumber \\
    & = f_*(p)(\underline{a}) \ni \0.
\end{align} 
This shows that $0 \in V(f_*(\varphi^*p))$. Then, the property that the map $f: \H \rightarrow \T$ is a RAC homomorphism gives that there exists an element $\widetilde{a} \in \H$ such that $f(\widetilde{a}) = 0$ and $\widetilde{a} \in V(\varphi^*p)$.\\

\noindent Furthermore, to ensure that $\tilde{a}$ can be pushed forward to a root of $p$ the tuple $D \in \Z^n$ must be chosen with the following property. Choose $D \in \Z^n$ such that the dot products of $D$ taken with exponent vectors of monomial terms of $p = \foo_{I} c_{I} \odot \underline{X}^{I}$, are all distinct. Explicitly, 
$$ D \cdot J = d_1 \cdot j_1 + \dots + d_n \cdot j_n \neq d_1 \cdot j_1' + \dots + d_n \cdot j_n' = D \cdot J',$$
for all pairs $J,J' \in I$. The condition $D \cdot J \neq D \cdot J'$ can be interpreted as $D$ not lying on the hyperplane defined by $\underline{X} \cdot (J - J')$. Therefore, it is possible to pick such a $D \in \Z^n$, as the number of possible pairs $J,J'$ is finite and $\Z^n$ can not be covered by a finite union of hyperplanes.  \\

\noindent The pullback $\varphi^{*}p$ utilises the requirement imposed on the tuple $D = (d_1 \, , \dots \, , d_n)$. If the requirement was not imposed, this would allow multiple monomials in the restricted polynomial to have equal exponents. This would lead to the corresponding coefficient being a hypersum, thus the potentially detrimental possibility of the restriction becoming a set of polynomials rather than a single polynomial, which is what is needed here. \\

\noindent This condition on $D$ implies that the element $\widetilde{a}$ can then be pushed forward through $\varphi$ to give an element $\varphi(\widetilde{a}) \in V(p)$. As the diagram commutes, this shows that the $F(\varphi(\widetilde{a})) = \psi(f(\widetilde{a})) = \psi(0) = \underline{a}$, which is sufficient to show that for every element of $V(f_*(p))$ there is a lift to an element of $V(p)$. This demonstrates that $V(f_*(p)) \subseteq f(V(p))$, giving the desired result. \\

\noindent Hence, it has been shown that both $f(V(p)) \subseteq V(f_*(p))$ and $V(f_*(p)) \subseteq f(V(p))$ hold. These taken together demonstrate the required equality, $V(f_*(p)) = f(V(p))$.
\end{proof}

\noindent The structure of the proof, restricting to univariate polynomials to use the RAC property is based on the argument presented in a proof of the original theorem in tropical geometry, as seen in \cite{MB}.  Therefore, this adapted proof along with Example \ref{trop} gives a proof which encompasses that of the original Kapranov's Theorem.\\

\noindent In particular, since $\eta:\T\C \rightarrow \T$ is RAC, the above theorem applies to it. 

\section{Characterising RAC Maps.}\label{characterisingRAC}

\noindent This section aims to present sufficient conditions for a hyperfield homomorphism to be a RAC map. This will not include a complete description of the necessary conditions for a hyperfield homomorphism to be a RAC; this remains an open topic. To begin the exploration into RAC hyperfield homomorphisms, a definition linking multiplicities of roots to the degree of the polynomial. 

\begin{definition}
A hyperfield is said to satisfy the \emph{multiplicity bound} if for all univariate polynomials, $p \in \H[X]$,
$$ \sum_{a \in \H} \mathrm{mult}_a(p) \le \mathrm{deg}(p).$$
Furthermore, a hyperfield is said to satisfy the \emph{multiplicity equality} if the above inequality is an equality for all univariate polynomials. 
\end{definition}

\noindent Proposition B from \cite{BL}  describes a relationship between the multiplicities of roots over a field and the multiplicities of the push-forward of these roots over a hyperfield. It is stated below, in notation consistent with this paper. 

\begin{proposition}(\cite[Prop. B]{BL})\label{propB}
Let $K$ be a field and $\H$ a  hyperfield with hyperfield homomorphism $f: K \rightarrow \H$.  Let $p \in K[X]$, with push-forward $f_*(p) \in \H[X]$.Then, 
$$ \mathrm{mult}_b(f_*(p)) \ge \sum_{a \in f^{-1}(b)} \mathrm{mult}_a(p)$$
for all $b \in \H$. Moreover, if $\H$ satisfies the multiplicity bound for the polynomial $f_*(p)$ and $p \in K[X]$ splits into linear factors then, 
$$  \mathrm{mult}_b(f_*(p)) = \sum_{a \in f^{-1}(b)} \mathrm{mult}_a(p).$$
\end{proposition}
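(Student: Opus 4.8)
The plan is to prove the inequality $\mathrm{mult}_b(f_*(p)) \ge \sum_{a \in f^{-1}(b)} \mathrm{mult}_a(p)$ by induction on $\deg p$, then deduce the equality statement from the inequality together with the multiplicity bound by a counting argument. For the inequality, write $m_a := \mathrm{mult}_a(p)$ for each $a \in f^{-1}(b)$. Since $K$ is a field, $p$ factors as $p(X) = \bigl(\prod_{a \in f^{-1}(b)} (X - a)^{m_a}\bigr) \cdot r(X)$ for some $r \in K[X]$, where $r$ need not vanish at any preimage of $b$. Applying $f_*$ and using the elementary fact that the push-forward of a product over a \emph{field} lands inside the hyperproduct of the push-forwards — i.e. $f_*(gh) \in f_*(g) \odot f_*(h)$, which follows coefficient-wise from the homomorphism property $f(x \boxplus y) \subseteq f(x) \boxplus f(y)$ applied to the (single-valued, since $K$ is a field) convolution sums — one gets that $f_*(p)$ lies in $\bigl(X \boxplus -b\bigr)^{\odot \sum m_a} \odot f_*(r')$ for a suitable $r'$. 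Feeding this into the recursive Definition \ref{multidef} of multiplicity over $\H$ repeatedly then yields $\mathrm{mult}_b(f_*(p)) \ge \sum_{a \in f^{-1}(b)} m_a$.

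For the moreover clause, assume $p$ splits into linear factors over $K$, say $p(X) = c \prod_{j} (X - \alpha_j)$ with $\deg p = \sum_j 1$, and assume $\H$ satisfies the multiplicity bound for $f_*(p)$. Partition the roots $\alpha_j$ according to the value $f(\alpha_j) \in \H$; this gives, for each $b$ in the image, $\sum_{a \in f^{-1}(b)} \mathrm{mult}_a(p) = \#\{ j : f(\alpha_j) = b\}$, and summing over all $b \in \H$ recovers $\deg p$ exactly (every linear factor is counted once). Now sum the already-established inequality over all $b \in \H$:
\[
\deg p \;=\; \sum_{b \in \H} \sum_{a \in f^{-1}(b)} \mathrm{mult}_a(p) \;\le\; \sum_{b \in \H} \mathrm{mult}_b(f_*(p)) \;\le\; \deg(f_*(p)) \;=\; \deg p,
\]
where the last inequality is the multiplicity bound and the final equality holds because $f_*$ preserves the degree (the leading coefficient $f(c)$ is nonzero, as $f$ is a hyperfield homomorphism and hence injective on the multiplicative group modulo $\0$... more precisely $f(c) = \0$ would force $c$ to be a zero divisor). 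Hence the outer terms agree, so every inequality in the chain is an equality; since each summand $\mathrm{mult}_b(f_*(p)) \ge \sum_{a \in f^{-1}(b)} \mathrm{mult}_a(p)$ is individually $\ge$, termwise equality must hold, which is exactly the claimed identity.

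The main obstacle I anticipate is the first step: making precise and correct the passage from a factorization over the field $K$ to a statement about membership in an iterated hyperproduct $(X \boxplus -b)^{\odot k} \odot q$ over $\H$, and then threading that through the recursive definition of $\mathrm{mult}$. The recursion in \eqref{multidef} takes a maximum over \emph{all} witnesses $q$ with $p \in (X \boxplus -a)\odot q$, so one has to exhibit one good chain of witnesses of length $\sum m_a$ over $\H$ and argue the recursion can follow it; care is needed because hyper-multiplication of polynomials is set-valued, so "the" quotient is not unique, and one must check the relevant containment $f_*(p) \in (X \boxplus -b)\odot f_*(p/(X-a))$ genuinely holds rather than just the reverse. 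Everything else — the partition of linear factors, the degree bookkeeping, and the squeeze argument — is routine once the inequality is in hand.
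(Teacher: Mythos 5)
The paper does not prove this statement; it cites it directly as Proposition B of Baker--Lorscheid \cite{BL}, so there is no internal proof to compare against. Your reconstruction follows the natural (and, to my knowledge, the actual) route: factor $p$ over the field, use the coefficient-wise containment $f_*(gh)\in f_*(g)\odot f_*(h)$ to peel off one linear factor at a time, thread that chain through the recursive multiplicity Definition~\ref{multidef} to get the inequality, and then close the equality case by summing over $b\in\H$ and squeezing against the multiplicity bound; this is all correct. Two small remarks: (i) the ``main obstacle'' you flag at the end is in fact already fully handled by the containment $f_*(p)\in f_*(X-a_1)\odot f_*\!\bigl((X-a_2)\cdots(X-a_k)r\bigr)$ and its iterates, each of which produces a valid witness $q$ in the recursive definition, so there is no remaining gap --- it would be worth writing the chain out rather than deferring it; and (ii) the justification that $\deg f_*(p)=\deg p$ via ``$c$ would be a zero divisor'' is the wrong reason (a field has no zero divisors to begin with); the clean argument is that for nonzero $c\in K$ one has $f(c)\odot f(c^{-1})=f(\1)=\1\neq\0$, forcing $f(c)\neq\0$.
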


\begin{lemma}\label{localbackinclusion}
Let $f: K \rightarrow \H$ be a hyperfield homomorphism, with $\H$ satisfying the multiplicity bound. If the polynomial $p \in K[X]$ splits into linear factors, then $V(f_*(p)) \subseteq f(V(p))$ for $p$.
\end{lemma}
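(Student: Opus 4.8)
The plan is to use Proposition \ref{propB} to convert the multiplicity bound hypothesis into an equality of multiplicities, and then read off the inclusion $V(f_*(p)) \subseteq f(V(p))$ from the fact that nonzero multiplicity detects membership in the variety. First I would take a root $b \in V(f_*(p))$, so that $\mathrm{mult}_b(f_*(p)) \ge 1$ by the definition of multiplicity (a non-root has multiplicity $0$). Since $p \in K[X]$ splits into linear factors and $\H$ satisfies the multiplicity bound for $f_*(p)$, the ``moreover'' part of Proposition \ref{propB} applies and gives
$$ \mathrm{mult}_b(f_*(p)) = \sum_{a \in f^{-1}(b)} \mathrm{mult}_a(p). $$
Hence $\sum_{a \in f^{-1}(b)} \mathrm{mult}_a(p) \ge 1$, so at least one $a \in f^{-1}(b)$ has $\mathrm{mult}_a(p) \ge 1$; for such an $a$, the definition of multiplicity forces $a$ to be a root of $p$, i.e. $a \in V(p)$. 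Since $f(a) = b$, this shows $b \in f(V(p))$, and as $b$ was arbitrary we conclude $V(f_*(p)) \subseteq f(V(p))$.

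There is a small bookkeeping point worth addressing explicitly: the multiplicity bound over $K$ (a field, viewed as a trivial hyperfield) is automatic, since over a field $\mathrm{mult}_a(p)$ agrees with the classical root multiplicity and these sum to at most $\deg(p)$; this is implicitly what lets ``$p$ splits into linear factors'' be a meaningful hypothesis and what makes $\sum_{a \in f^{-1}(b)} \mathrm{mult}_a(p)$ a finite sum of ordinary non-negative integers. I would also note that the case $b = \0$ is harmless: either it is excluded by convention from the varieties considered, or $\0 \in f^{-1}(\0)$ and the same argument goes through, so no separate treatment is needed.

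The main (and only real) obstacle is simply verifying that the hypotheses of Proposition \ref{propB} are met in the form stated here — in particular that ``$\H$ satisfies the multiplicity bound for the polynomial $f_*(p)$'' is exactly what the blanket hypothesis ``$\H$ satisfies the multiplicity bound'' provides, applied to the specific polynomial $f_*(p)$. Everything else is a direct unwinding of the recursive definition of $\mathrm{mult}$ in Equation \eqref{multidef}: a root has multiplicity at least $1$, and an element of multiplicity at least $1$ is a root. No computation with the hyperaddition is required, which is why this lemma is essentially a corollary of Proposition \ref{propB} rather than an independent argument.
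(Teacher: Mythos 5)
Your proof is correct and follows essentially the same approach as the paper: take $b \in V(f_*(p))$, observe $\mathrm{mult}_b(f_*(p)) > 0$, invoke the equality from Proposition \ref{propB} (valid since $p$ splits and $\H$ satisfies the multiplicity bound), and deduce some $a \in f^{-1}(b)$ has positive multiplicity, hence lies in $V(p)$. The extra bookkeeping remarks you add about $K$ being a field and the $b = \0$ case are reasonable but not present in the paper's version.
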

\begin{proof}
Take the polynomial $p \in K[X]$, $f_*(p) \in \H[X]$. Take $b\in V(f_*(p))$, so $ \0 \in f_*(p)(b)$ and hence $\mathrm{mult}_b(f_*(p)) >0$.
As $p$ splits into linear factors over $K$ and $\H$ satisfies the multiplicity bound, by Proposition \ref{propB} we have, 
$$0 < \mathrm{mult}_b(f_*(p)) = \sum_{a \in f^{-1}(b)} \mathrm{mult}_a(p). $$
Therefore, there exists $\hat{a} \in f^{-1}(b)$ such that $\mathrm{mult}_{\hat{a}}(p) >0$. Thus, $\hat{a}$ is a root of $p \in K[X]$. This demonstrates that given an element $b \in V(f_*(p))$ there exists an element $\hat{a}$, such that $f(\hat{a}) = b$ and $p(\hat{a}) = 0$. This shows that $ V(f_*(p)) \subseteq f(V(p))$.
\end{proof}

\noindent The above lemma provides a method for detecting whether a hyperfield homomorphism from a field to a hyperfield is a RAC map. The next stage is to understand whether this view can be extended to maps $f: \H_1 \rightarrow \H_2$. Proposition \ref{propB} is the key tool used in the proof of Lemma \ref{localbackinclusion}. The following discussion aims to explore a generalisation of this result for maps $f: \H_1 \rightarrow \H_2$. Firstly, it is important to recognise that this is not a simple generalisation and the property from Prop. B in \cite{BL} does not always hold in this less restrictive setting.

\begin{example}\label{PKeg}
Take the hyperfield homomorphism $f: \P \rightarrow \K$. Note that, over the Krasner hyperfield $\K$, the multiplicity bound achieves equality for all polynomials: $\sum_{b \in \K}\mathrm{mult}_b(p) = \mathrm{deg}(p)$ for all $p \in \K[X]$, (see Remark 1.11 in \cite{BL} for details). Take the polynomial $p = X^2 \boxplus X \boxplus 1 \in \P[X]$, then due to the argument presented in \cite[remark 1.10]{BL}, $\sum_{a \in \P}\mathrm{mult}_a(p) = \infty$, in particular $a=e^{i\theta}$ is a root of $p$ for all $\frac{\pi}{2} < \theta <\frac{3\pi}{2}$. Now the push-forward coefficients of the polynomial are unchanged, $f_*(p) = X^2 \boxplus X \boxplus 1$, with $\sum_{b \in \K} \mathrm{mult}_b(f(p)) = 2$. This then leads to, 
$$ \infty = \sum_{a \in \P \setminus \{0\}} \mathrm{mult}_a(p) = \sum_{a \in f^{-1}(1)} \mathrm{mult}_a(p) \not \le \mathrm{mult}_1(f(p)) = 2.$$
This demonstrates that the property does not hold in total generality over all hyperfield homomorphisms. 
\end{example}

\noindent There are several key properties of fields that underpin the result in Proposition \ref{propB}. These properties do not automatically hold for hyperfields. The first key property is that all fields satisfy the multiplicity bound. The second has to do with the factorisation process of polynomials. Restricting to hyperfields with the multiplicity bound is a solution to half the problem, whereas the factorisation property needs to be discussed in further detail.
 
If $K$ is a field, then $K[X]$ is a unique factorization domain. Moreover, if $K$ is algebraically closed then every polynomial $p \in K[X]$ factorises completely into a product of linear factors. It is not obvious that these properties extend to hyperfields. This is due to the non-uniqueness of the choice of factorisation, even for more well-behaved hyperfields, such as those with the doubly distributive property (see \cite{BS} for description of doubly distributive hyperfields). It could occur that for two distinct roots, the maximum multiplicity is achieved with different factorisations. In an attempt to overcome this the next definition is introduced.  

\begin{definition}
Given a polynomial $p \in \H[X]$, denote the list of roots, inclusive of repetitions corresponding to multiplicities, by $A_p: =\{a_1 \, \dots \, a_k\}$. A hyperfield $\H$ is said to have the \emph{inheritance} property at $p$, if for all subsets $\{a_{j_1}, \, \dots \, , a_{j_m}\} \subseteq A_p$, with $m \le \mathrm{deg}(p)$, there exists $q \in \H[X]$ such that 
$$ p \in (X-a_{j_1})\odot(X-a_{j_2}) \odot \dots \odot (X-a_{j_m}) \odot q.$$ 
A hyperfield is said to have the \emph{inheritance} property in general if it holds for every polynomial $p \in \H[X]$.
\end{definition}
 
 \noindent The work in this paper does not extend to characterising the multiplicity bound or inheritance properties, but rather opens this area up for exploration. The following conjectures are based on the current knowledge of doubly distributive hyperfields, specifically including $\K,\S$ and $\T$.
 
\begin{conjecture}
All hyperfields with the doubly distributive property satisfies the multiplicity bound.
\end{conjecture}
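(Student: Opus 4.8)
The plan is to prove the bound by strong induction on $\mathrm{deg}(p)$, reducing the global count $\sum_{a\in\H}\mathrm{mult}_a(p)$ to a purely local statement about how multiplicities behave under division by a single linear polynomial. The base case $\mathrm{deg}(p)\le 1$ is immediate: a nonzero constant has no roots, and a linear polynomial $c_1 X\boxplus c_0$ has at most one root, since $c_1 b\boxplus c_0$ is single-valued away from the unique $b$ with $c_1 b=-c_0$. For the inductive step, if $p$ has no roots there is nothing to prove, so fix a root $a$. By the recursive definition of multiplicity there is $q\in\H[X]$ with $p\in(X\boxplus -a)\odot q$ and $\mathrm{mult}_a(q)=\mathrm{mult}_a(p)-1$; comparing the top coefficients forces $\mathrm{deg}(q)=\mathrm{deg}(p)-1$, and the inductive hypothesis applied to $q$ also guarantees that every multiplicity that occurs is finite and supported on a finite set, which makes the ``$\max$'' in the definition of $\mathrm{mult}_a(p)$ well defined in the first place.

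Everything then hinges on the following \emph{local inheritance lemma}: if $p\in(X\boxplus -a)\odot q$ and $b\ne a$, then $\mathrm{mult}_b(p)\le\mathrm{mult}_b(q)$. Granting this, one computes
\[
\sum_{c\in\H}\mathrm{mult}_c(p)=\mathrm{mult}_a(p)+\sum_{c\ne a}\mathrm{mult}_c(p)\le\bigl(1+\mathrm{mult}_a(q)\bigr)+\sum_{c\ne a}\mathrm{mult}_c(q)=1+\sum_{c\in\H}\mathrm{mult}_c(q)\le 1+(\mathrm{deg}(p)-1)=\mathrm{deg}(p),
\]
closing the induction. To prove the local inheritance lemma I would argue by a second induction, on $m=\mathrm{mult}_b(p)$: the case $m=0$ is trivial, and for $m\ge 1$ one chooses $p'$ with $p\in(X\boxplus -b)\odot p'$ and $\mathrm{mult}_b(p')=m-1$. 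The engine that drives this induction is a \emph{confluence (diamond) lemma}: whenever $p\in(X\boxplus -a)\odot q$ and $p\in(X\boxplus -b)\odot p'$ with $a\ne b$, there exists $r\in\H[X]$ with $q\in(X\boxplus -b)\odot r$ and $p'\in(X\boxplus -a)\odot r$. Given such an $r$, the inductive hypothesis applied to $p'$ yields $m-1=\mathrm{mult}_b(p')\le\mathrm{mult}_b(r)$, and then $q\in(X\boxplus -b)\odot r$ forces $\mathrm{mult}_b(q)\ge 1+\mathrm{mult}_b(r)\ge m$.

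The hard part — and the reason this remains a conjecture — is the confluence lemma, and this is exactly where double distributivity should be indispensable. Writing $p=\foo_i c_i X^i$, $q=\foo_i q_i X^i$, $p'=\foo_i p'_i X^i$, the hypotheses read $c_i\in q_{i-1}\boxplus(-a)q_i$ and $c_i\in p'_{i-1}\boxplus(-b)p'_i$ for all $i$, and one wants to solve $q_i\in r_{i-1}\boxplus(-b)r_i$ and $p'_i\in r_{i-1}\boxplus(-a)r_i$ simultaneously. The natural approach is to build $r=\foo_i r_i X^i$ by descending recursion on $i$, choosing each $r_{i-1}$ inside an intersection of hypersums; double distributivity is precisely the axiom controlling how such linear hypersums expand and recombine, and should guarantee these intersections are nonempty (in its absence the phase hyperfield already fails the analogous inheritance statement, cf. Example \ref{PKeg}). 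I expect this recursion to be the main obstacle, because the choices made at each stage must remain globally compatible; a successful argument would simultaneously establish the two-factor case of the inheritance property and, iterated, is likely to yield the full inheritance property for doubly distributive hyperfields.

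Should a uniform argument prove elusive, a complementary route is to invoke a classification of doubly distributive hyperfields and check the bound case by case: for fields it is classical, for $\K$, $\S$ and $\T$ it is contained in \cite{BL}, and the remaining members of the list would each be handled through the combinatorics of their Newton-polygon-type descriptions of roots.
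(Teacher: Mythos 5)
The paper itself does not prove this statement; it is left explicitly as an open conjecture (the author writes that ``the work in this paper does not extend to characterising the multiplicity bound or inheritance properties''), so there is no paper proof to compare against. Your proposal is a coherent plan of attack rather than a proof, and you say so yourself. The reduction is sound: the base case and the degree bookkeeping (that $\deg q = \deg p - 1$ whenever $p\in(X\boxplus -a)\odot q$, and that finiteness of multiplicities propagates down the induction so the $\max$ in the recursive definition is well defined) are correct, and granted the local inheritance lemma the strong induction on $\deg p$ closes cleanly; granted the confluence lemma, the local inheritance lemma follows by the second induction exactly as you outline.

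The genuine gap, which you candidly flag, is the confluence (diamond) lemma, and it is not a technicality --- it is essentially the entire content of the conjecture. Your discussion of it is speculative rather than argued (``should guarantee,'' ``I expect this recursion to be the main obstacle''). Double distributivity tells you how a single product of hypersums expands term-by-term, but the confluence statement asks for a \emph{simultaneous} solution $r=\foo_i r_i\odot X^i$ to two independent systems of containments $q_i\in r_{i-1}\boxplus(-b)\odot r_i$ and $p'_i\in r_{i-1}\boxplus(-a)\odot r_i$, and it is neither evident nor argued that the two candidate sets for each $r_{i-1}$ intersect, nor that the choices can be made globally compatible as $i$ descends. Your appeal to Example \ref{PKeg} is only indirect motivation: $\P$ is a non-example of the conclusion, not a test of the hypothesis unless you first establish that $\P$ is not doubly distributive. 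Until the confluence lemma is proved --- or until a doubly distributive counterexample beyond $\K$, $\S$, $\T$ and fields rules it out --- the conjecture remains open, and your proposal, while a reasonable reduction, does not close it.
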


\begin{conjecture}
All hyperfields with the doubly distributive property satisfies the inheritance property.
 \end{conjecture}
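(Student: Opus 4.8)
The plan is to extract the chosen roots from $p$ one value at a time, grouped by multiplicity. Fix $p \in \H[X]$ and a sub-multiset $\{a_{j_1},\dots,a_{j_m}\} \subseteq A_p$ with $m \le \mathrm{deg}(p)$, and let $c_1,\dots,c_s$ be the distinct values occurring in it, with $c_t$ occurring $\mu_t$ times, so that $\mu_1+\dots+\mu_s = m$ and $\mathrm{mult}_{c_t}(p) \ge \mu_t$ for each $t$. The first ingredient is an ``evaluation is almost multiplicative'' lemma that holds because $\H$ is doubly distributive: if $g,q \in \H[X]$ and $p' \in g \odot q$, then $p'(a) \subseteq g(a) \odot q(a)$ for every $a \in \H$. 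One verifies this by expanding the product of the two hypersums $g(a)$ and $q(a)$ into all of its cross terms -- which is precisely the step that uses double distributivity -- and comparing with the coefficients of $p'$; without double distributivity it can fail (compare Example \ref{PKeg}). Since a hyperfield has no zero divisors and additive inverses are unique, it follows that every root of $p'$ is a root of $g$ or of $q$, and that $\0 \in a \boxplus (-b)$ holds exactly when $a = b$. Hence if $p' \in (X \boxplus -b_1) \odot \cdots \odot (X \boxplus -b_r) \odot q$ and no $b_i$ equals $a$, then every root of $p'$ is already a root of $q$.

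Granting this, the natural procedure is to remove all $\mu_1$ copies of $c_1$, then all $\mu_2$ copies of $c_2$, and so on. Removing $\mu$ copies of a single value $c$ from a polynomial $h$ with $\mathrm{mult}_c(h) \ge \mu$ is immediate from the recursive definition of multiplicity: iterating it produces a chain $h = h_0, h_1, \dots, h_\mu$ with $h_{i-1} \in (X \boxplus -c) \odot h_i$, hence $h \in (X \boxplus -c)^{\odot \mu} \odot h_\mu$. The delicate point is that the polynomial left after each stage must still carry enough multiplicity at the values not yet treated. This is governed by the key lemma whose proof would be the heart of the argument:

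\emph{(Commuting extractions.)} In a doubly distributive hyperfield, if $c \neq c'$ and $p' \in (X \boxplus -c) \odot q$, then $\mathrm{mult}_{c'}(q) \ge \mathrm{mult}_{c'}(p')$.

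Applying this lemma along the chain of single-factor extractions used to remove the copies of $c_1$ shows that $\mathrm{mult}_{c_t}$ does not decrease, so when the turn of $c_t$ arrives the current polynomial still has a root of multiplicity at least $\mu_t$ there; iterating over $t = 1, \dots, s$ yields the desired $q$ with $p \in (X \boxplus -a_{j_1}) \odot \cdots \odot (X \boxplus -a_{j_m}) \odot q$, which is the inheritance property for $p$. I expect the commuting-extractions lemma to be the main obstacle. It says that two factorizations of a polynomial by distinct linear forms can be reconciled, and the non-uniqueness of hyperfield factorizations noted earlier makes this genuinely subtle; I would attempt it by induction on $\mathrm{mult}_{c'}(p')$, where the positive case (using the root-propagation consequence above) reduces to producing a common quotient of $p'$ by $(X \boxplus -c)$ and by $(X \boxplus -c')$, controlling its coefficients via synthetic division and double distributivity. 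Should a direct argument prove intractable, an alternative would be to invoke a structural description of doubly distributive hyperfields and check inheritance along it; the absence of a complete argument here is exactly why the statement is recorded as a conjecture.
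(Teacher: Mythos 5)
The statement you are addressing is labelled a \emph{conjecture} in the paper, and no proof of it appears there: the authors explicitly leave the multiplicity bound and inheritance properties of doubly distributive hyperfields as open topics. There is therefore no paper proof against which to compare your argument, and your write-up is candid about being a strategy rather than a proof, so there is no dishonesty to flag. Taken on its own terms, the strategy is sensible. The reduction to an ``evaluation is almost multiplicative'' lemma ($p' \in g \odot q$ implies $p'(a) \subseteq g(a) \odot q(a)$) is a reasonable use of double distributivity, and combined with the absence of zero divisors and uniqueness of additive inverses it does propagate roots of a product to roots of its factors. The chain produced by the recursive definition of multiplicity correctly peels off repeated copies of a single value. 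You have also correctly located the genuine obstruction in the ``commuting extractions'' step: that extracting a factor $(X \boxplus -c)$ should not decrease the multiplicity at a distinct $c'$. That is precisely the kind of interaction-between-factorisations statement that the non-uniqueness of quotients in $\H[X]$ makes hard, and it is exactly why the statement remains a conjecture.

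Two cautions on the details. First, your commuting-extractions lemma quantifies over \emph{every} quotient $q$ with $p' \in (X \boxplus -c) \odot q$. Since quotients are highly non-unique over hyperfields, this universal form is a strong claim; for the inductive scheme you describe, a weaker existential form (there is \emph{some} chain of quotients that simultaneously witnesses the multiplicities at all the chosen values) would already suffice and may be easier to aim at. Second, the appeal to Example~\ref{PKeg} is only loosely apposite: that example exhibits a failure of the \emph{multiplicity bound} over the phase hyperfield, not directly a failure of evaluation-multiplicativity in the absence of double distributivity, so it motivates the overall caution but does not witness the specific phenomenon you cite it for. None of this resolves the conjecture, and you say as much; but the decomposition into an evaluation-multiplicativity lemma plus a commuting-extractions lemma is a plausible roadmap consistent with the known cases $\K$, $\S$, and $\T$ that motivated the conjecture.
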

 
\noindent There will now be a demonstration of the implications of the multiplicity bound and the inheritance property.

\begin{lemma}\label{multiboundpfbound}
Given the hyperfield homomorphism $f: \H_1 \rightarrow \H_2$, then for all $p = \foo_{i=0}^n c_i \odot X^i \in \H_1[X]$, 
\begin{equation}\label{PFmultibound}
\mathrm{mult}_b(f_*(p)) \ge \sum_{a \in f^{-1}(b)} \mathrm{mult}_a(p) 
\end{equation}
holds if $\H_1$ satisfies the multiplicity bound and inheritance property. 
\end{lemma}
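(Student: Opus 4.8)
The plan is to prove the inequality fibre by fibre over $b$, converting a factorisation of $p$ supplied by the inheritance property into a lower bound on $\mathrm{mult}_b(f_*(p))$. So fix $b \in \H_2$, let $a_1 \, , \dots \, , a_r$ be the distinct roots of $p$ lying in $f^{-1}(b)$, write $m_i := \mathrm{mult}_{a_i}(p)$ and $M := \sum_i m_i = \sum_{a \in f^{-1}(b)}\mathrm{mult}_a(p)$. If $M = 0$ there is nothing to prove, so from now on assume $M \ge 1$.

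First I would invoke the multiplicity bound for $\H_1$, which gives $M \le \sum_{a \in \H_1}\mathrm{mult}_a(p) \le \mathrm{deg}(p)$. Hence the multiset obtained by listing each $a_i$ with multiplicity $m_i$ is a sub-multiset of the root list $A_p$ of cardinality $M \le \mathrm{deg}(p)$, so the inheritance property of $\H_1$ at $p$ applies to it and produces some $q \in \H_1[X]$ with
$$ p \in (X \boxplus -a_1)^{\odot m_1} \odot \cdots \odot (X \boxplus -a_r)^{\odot m_r} \odot q . $$

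Next I would push this containment forward through $f$. The basic compatibility to establish is: if $s \in (X \boxplus -a)\odot t$ in $\H_1[X]$, then $f_*(s) \in (X \boxplus -f(a))\odot f_*(t)$ in $\H_2[X]$; this follows by comparing $X^k$-coefficients, since each coefficient of $s$ lies in a hypersum of products of coefficients of $t$ with $a$, and $f$, being a hyperfield homomorphism, sends that hypersum into the corresponding hypersum over $\H_2$. Iterating this through the displayed factorisation and using $f(a_i) = b$ for every $i$ yields
$$ f_*(p) \in (X \boxplus -b)^{\odot M}\odot f_*(q) . $$

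Finally I would close with an auxiliary claim, proved by induction on $k$: if $s \in (X \boxplus -b)^{\odot k}\odot t$ in $\H_2[X]$ then $\mathrm{mult}_b(s) \ge k$. The case $k = 0$ is vacuous. For $k \ge 1$, rewriting $(X \boxplus -b)^{\odot k}\odot t$ as $(X \boxplus -b)\odot\bigl((X \boxplus -b)^{\odot(k-1)}\odot t\bigr)$, membership of $s$ furnishes some $t'$ in $(X \boxplus -b)^{\odot(k-1)}\odot t$ with $s \in (X \boxplus -b)\odot t'$; since $s \in (X \boxplus -b)\odot t'$ forces $b$ to be a root of $s$ (the standard fact from \cite{BL} underpinning the recursive definition of multiplicity, obtained via reversibility), the recursion for $\mathrm{mult}_b(s)$ together with the inductive hypothesis applied to $t'$ gives $\mathrm{mult}_b(s) \ge 1 + \mathrm{mult}_b(t') \ge k$. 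Taking $s = f_*(p)$, $k = M$, $t = f_*(q)$ then gives $\mathrm{mult}_b(f_*(p)) \ge M$, as required. The step I expect to be the main obstacle is the push-forward compatibility, since multiplication of polynomials over a hyperfield is genuinely set-valued, so every assertion must be phrased via containments and checked against the homomorphism axioms; by contrast the multiplicity-bound hypothesis enters only to license the use of the inheritance property, and the final induction is routine once the "forced root" fact is in place.
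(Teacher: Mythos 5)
Your proposal is correct and follows essentially the same route as the paper's proof: use the multiplicity bound to license the inheritance property, extract a factorisation of $p$ through the linear factors $(X \boxplus -a_i)$ for the roots in $f^{-1}(b)$, push it forward coefficient-wise, and read off $\mathrm{mult}_b(f_*(p)) \ge M$. You are simply more explicit than the paper in two places it treats as immediate, namely the coefficient-wise check that $s \in (X \boxplus -a)\odot t$ implies $f_*(s) \in (X \boxplus -f(a))\odot f_*(t)$, and the induction on $k$ (via the ``forced root'' fact from \cite{BL}) turning membership in $(X \boxplus -b)^{\odot k}\odot t$ into the bound $\mathrm{mult}_b \ge k$.
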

\begin{proof}
As $\H_1$ satisfies the multiplicity bound the list of roots, including repetitions corresponding to the multiplicities, $\{a_1 \, \dots \, a_k\}$ is a finite set with $k \le \mathrm{deg}(p)$. Take the subset $\{a_{j_1} \, \dots  \, a_{j_m}\} \subseteq \{a_1 \, \dots \, a_k\}$, such that $f(a_{j_1}) = \dots = f(a_{j_m}) = b$. These are the only elements of $\H_1$ that are roots of $p$ and push-forward to $b$. By the inheritance property, there exists a $q \in \H_1[X]$, such that, 
$$ p \in (X - a_{j_1}) \odot \dots \odot (X - a_{j_m}) \odot q.$$
By the hyperfield homomorphism properties it can be seen that under $f_*$, 
\begin{align}
    f_*(p) &\in  (X - f(a_{j_1})) \odot \dots \odot (X - f(a_{j_m})) \odot f_*(q) \nonumber \\
    & \in (X - b) \odot \dots \odot (X - b) \odot f_*(q). \nonumber 
\end{align}
This gives $\mathrm{mult}_b (f_*(p)) \ge m$, implying that, 
$$ \mathrm{mult}_b (f_*(p)) \ge \sum_{a \in f^{-1}(b)} \mathrm{mult}_a (p).$$
\end{proof}

\begin{theorem}\label{boundeqRAC} 
Given $\H_1$ which satisfies the multiplicity equality and has the inheritance property, and $\H_2$ that satisfies the multiplicity bound, then the homomorphism $f: \H_1 \rightarrow \H_2$ is a RAC map. 
\end{theorem}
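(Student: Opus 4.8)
The plan is to run a ``squeeze'' argument on total multiplicities: the hypotheses pin the total multiplicity of $p$ over $\H_1$ to $\deg(p)$ from below and the total multiplicity of $f_*(p)$ over $\H_2$ to $\deg(p)$ from above, and Lemma \ref{multiboundpfbound} connects the two, forcing its inequality to be an equality fibrewise. Reading off a root from that equality then gives the RAC condition directly.

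First I would fix a univariate polynomial $p = \foo_{i=0}^{n} c_i \odot X^i \in \H_1[X]$ of degree $n$, and note that its push-forward $f_*(p)$ again has degree exactly $n$: since $c_n \neq \0$ it has a multiplicative inverse, and $f(c_n) \odot f(c_n^{-1}) = f(c_n \odot c_n^{-1}) = f(\1) = \1 \neq \0$, so $f(c_n) \neq \0$. Next I would collect three facts. (i) Because $\H_1$ satisfies the multiplicity equality, $\sum_{a \in \H_1} \mathrm{mult}_a(p) = n$. (ii) Because $\H_1$ satisfies the multiplicity bound (a consequence of the equality) and the inheritance property, Lemma \ref{multiboundpfbound} gives $\mathrm{mult}_b(f_*(p)) \ge \sum_{a \in f^{-1}(b)} \mathrm{mult}_a(p)$ for every $b \in \H_2$. (iii) Because $\H_2$ satisfies the multiplicity bound, $\sum_{b \in \H_2} \mathrm{mult}_b(f_*(p)) \le \deg(f_*(p)) = n$.

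Summing the inequality in (ii) over all $b \in \H_2$ and using that the fibres $f^{-1}(b)$ partition $\H_1$ (empty fibres contribute $0$, and each $a \in \H_1$ lies in exactly one fibre), the left-hand side is at most $n$ by (iii) while the right-hand side equals $\sum_{a \in \H_1} \mathrm{mult}_a(p) = n$ by (i). Hence all the inequalities collapse to equalities; in particular $\mathrm{mult}_b(f_*(p)) = \sum_{a \in f^{-1}(b)} \mathrm{mult}_a(p)$ for every $b \in \H_2$. Finally, given any root $b \in V(f_*(p))$ we have $\mathrm{mult}_b(f_*(p)) \ge 1$, so this equality forces $\sum_{a \in f^{-1}(b)} \mathrm{mult}_a(p) \ge 1$, whence some $a \in f^{-1}(b)$ satisfies $\mathrm{mult}_a(p) \ge 1$, i.e. $a \in V(p)$. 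That is precisely the statement that $f$ is RAC.

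The argument is short, and the only points requiring care are the degree bookkeeping (that $f_*$ does not drop the degree, already handled above) and the fibre bookkeeping in the double sum; neither is a genuine obstacle. The real content of the theorem is therefore entirely front-loaded into Lemma \ref{multiboundpfbound}, whose proof is where the inheritance property does its work.
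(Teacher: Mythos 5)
Your proof is correct and follows essentially the same squeeze argument as the paper: multiplicity equality on $\H_1$ pins the total to $\deg(p)$, Lemma \ref{multiboundpfbound} pushes this forward fibrewise, and the multiplicity bound on $\H_2$ caps the total of $f_*(p)$, forcing termwise equality from which the lift of a root is read off. You are slightly more explicit than the paper on two bookkeeping points the paper leaves implicit --- that $\deg(f_*(p)) = \deg(p)$ because $f$ preserves units, and that the fibres $f^{-1}(b)$ partition $\H_1$ so the double sum telescopes --- but the structure and the role of each hypothesis are identical.
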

\begin{proof}
The hyperfield $\H_1$ satisfying the multiplicity equality and the inheritance property implies by Lemma \ref{multiboundpfbound} that, $\mathrm{mult}_b(f_*(p)) \ge \sum_{a \in f^{-1}(b)} \mathrm{mult}_a(p)$ holds. These together then imply that;
$$ \mathrm{deg}(p) = \sum_{a \in \H_1}\mathrm{mult}_a(p) \le \sum_{b \in \H_2}\mathrm{mult}_b(f_*(p)) \le \mathrm{deg}(f_*(p)) = \mathrm{deg}(p).$$
Explicitly, the first equality is given by the multiplicity equality of $\H_1$ and the second inequality holds due to the multiplicity bound on $\H_2$. Yielding, $\mathrm{mult}_b(f_*(p)) = \sum_{a \in f^{-1}(b)} \mathrm{mult}_a(p)$.
Then finally,  $\mathrm{mult}_b(f_*(p)) = \sum_{a \in f^{-1}(b)} \mathrm{mult}_a(p)$ gives that the map $f: \H_1 \rightarrow \H_2$  is RAC, using analogous logic to the proof of Lemma \ref{localbackinclusion}.
\end{proof}

\begin{remark}
Theorem \ref{boundeqRAC} demonstrates that there are sufficient conditions that can be given to both hyperfields to give the corresponding homomorphism as RAC, although this does not classify all RAC maps. It does incorporate the motivating example for the the paper, $\mathit{trop}: K \rightarrow \R \cup\{-\infty\}$. Although, it can be seen in Example \ref{Ex:TC-multi-bound} that $\T\C$ does not satisfy the multiplicity bound, and hence does not fulfil the conditions of Theorem \ref{boundeqRAC}. This demonstrates the theoretical complexity in attempting to outline the conditions for a hyperfield homomorphism to be RAC. 
\end{remark}

\begin{example}\label{Ex:TC-multi-bound}
Given the polynomial $ p = X^2 \boxplus X \boxplus 1 \in \T\C[X]$, then 
\begin{align}
p(-1) & =  (-1)^2 \boxplus -1 \boxplus 1 \nonumber \\
& = 1 \boxplus -1 \boxplus 1 \nonumber \\
& = \Big ( \{ a \in \C \, : \, |a| \le 1 \} \Big) \boxplus 1 \nonumber \\
& \ni -1 \boxplus 1 \ni 0 \nonumber 
\end{align}
\begin{align}
p(i) & = -1 \boxplus i \boxplus 1 \nonumber \\
& = \Big ( \{\text{shortest closed arc between} \, -1 \, \text{and} \, i \} \Big) \boxplus 1 \nonumber \\
& \ni -1 \boxplus 1 \ni 0 \nonumber 
\end{align}
\begin{align}
p(-i) & = -1 \boxplus -i \boxplus 1 \nonumber \\
& = \Big ( \{\text{shortest closed arc between} \, -1 \, \text{and} \, -i \} \Big) \boxplus 1 \nonumber \\
& \ni -1 \boxplus 1 \ni 0 \nonumber 
\end{align}

\noindent These three calculations over $\T\C$ imply that $\{-1,i,-i \} \subset V(p) \subset \T\C$. This shows that a degree 2 polynomial can have three distinct root over $\T\C$, thus the multiplicity bound does not hold. 
\end{example}

\bibliographystyle{plain}
\bibliography{References}

\end{document}